\newcommand{\rvline}{\hspace*{-\arraycolsep}\vline\hspace*{-\arraycolsep}}
\date{}
\newtheorem{lemma}{\bf Lemma}[section]
\newtheorem{example}{\bf Example}[section]
\newtheorem{theorem}{\bf Theorem}[section]
\newtheorem{proposition}[lemma]{\bf Proposition}
\newtheorem{definition}{\bf Definition}[section]
\newtheorem{remark}{\bf Remark}[section]
\journal{~}
\begin{document}
	
\begin{frontmatter}
		
		
 %

\author[add1]{Tapa Manna}
  \ead{mannatapa24@gmail.com}
\author[add2]{Sauvik Poddar}
  \ead{sauvikpoddar1997@gmail.com}
\author[add2]{Angsuman Das\corref{cor1}}
  \ead{angsuman.maths@presiuniv.ac.in}
\author[add1]{Baby Bhattacharya}
  \ead{babybhatt75@gmail.com}

  \cortext[cor1]{Correspondence Author}
  \address[add1]{Department of Mathematics,\\ NIT Agartala, India}
  \address[add2]{Department of Mathematics,\\ Presidency University, Kolkata, India}

\title{Prime Order Element Graph of a Group - II}

\begin{abstract}
In this sequel paper, we continue the analysis of the prime order element graph $\Gamma(G)$ of a finite group $G$, where vertices are elements of $G$ and edges connect distinct elements $x, y$ satisfying $\circ(xy) = p$ for some prime $p$. Our investigation focuses on the adjacency and Laplacian spectra, planarity, and clique number of this graph. We conclude by outlining open issues and potential directions for future investigations.
\end{abstract}
		
\begin{keyword}
Integral graphs \sep maximal clique \sep graph minors
\MSC[2008] 05C25 \sep 05C50 \sep 05C10
\end{keyword}
\end{frontmatter}

\section{Introduction}
The fundamental reason for associating graphs with groups is to facilitate the study of the group by translating its algebraic properties into the geometric/combinatorial language of graph theory. This translation process involves mapping group elements or subgroups to vertices and defining edges based on specific algebraic relations (e.g., commutativity, generation, difference sets). The literature features a variety of graphs defined on groups, such as Power Graphs 
\cite{Cameron-Ghosh}, Commuting graphs \cite{commuting-graph}, Gruenberg-Kegel graphs, Difference Graphs \cite{difference_graph}, and Comaximal Subgroup graphs \cite{das-saha}, all serving distinct purposes. (See \cite{Cameron-survey} for a comprehensive survey.) In this paper, we continue our study of one such graph called prime order element graph $\Gamma(G)$ of a finite group $G$ introduced in \cite{tapa1}. The forbidden subgraph characterization of this graph can be found in \cite{tapa2}. For terms and definitions related to graph theory, one can refer to \cite{west-graph-book}.

\begin{definition} Let $G$ be a finite group. The prime-order element graph $\Gamma(G)$ of a group $G$ is defined to be a graph with $G$ as the set of vertices and two distinct vertices $x$ and $y$ are adjacent if and only if order of $xy$ is prime.	
\end{definition}

In Section \ref{adj-spectrum} and \ref{lap-spectrum}, we investigate the adjacency and Laplacian spectrum of the graph $\Gamma(G)$. In Section \ref{planar-section}, we characterize the groups $G$ for which $\Gamma(G)$ is planar. In Section \ref{clique-section}, we discuss about the clique number of $\Gamma(G)$ when $G$ is an abelian $p$-group. Finally we conclude with some open issues in Section \ref{conclusion-section}.

\section{Adjacency Spectrum of $\Gamma(G)$}\label{adj-spectrum}
A graph $\Gamma$ is said to be \textit{integral}/\textit{Laplacian integral} if all the eigenvalues of the adjacency/Laplacian matrix of $\Gamma$ are integers. In this section, we prove that $\Gamma(G)$ is integral for an abelian $2$-group $G$, whereas for a cyclic $p$-group $G$ with $p$ an odd prime, $\Gamma(G)$ has an irrational eigenvalue.
\begin{theorem}\label{abelian-2-group-integral}
If $G$ is an abelian $2$-group, then the prime order element graph $\Gamma(G)$ is integral.
\end{theorem}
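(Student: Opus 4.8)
The plan is to reduce the adjacency relation to an involutive condition and then recognize $\Gamma(G)$ as a disjoint union of graphs with well-known integral spectra. Since $G$ is a $2$-group, every non-identity element has order a power of $2$, so the only prime that can equal $\circ(xy)$ is $2$; hence distinct $x,y$ are adjacent exactly when $xy$ is an involution, i.e. $xy \in H \setminus \{e\}$, where $H = \{g \in G : g^2 = e\}$ is the (elementary abelian) subgroup of elements of order dividing $2$. I would then introduce the squaring map $\psi \colon G \to G$, $\psi(g) = g^2$, whose kernel is $H$, and for each $c \in \psi(G)$ set $V_c = \psi^{-1}(c)$; these fibers are cosets of $H$, each of size $|H|$, and they partition the vertex set. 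The key observation is that $xy \in H$ is equivalent to $(xy)^2 = e$, i.e. $\psi(x) = \psi(y)^{-1}$, so an edge can only join $V_c$ to $V_{c^{-1}}$, and once $\psi(x)\psi(y) = e$ the condition $xy \in H$ is automatic, leaving $y \neq x^{-1}$ as the sole obstruction to adjacency.

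With this in hand, $\Gamma(G)$ splits into blocks carrying no edges between them, and I would identify each block as a standard integral graph. When $c = e$, the block $V_e = H$ induces the complete graph $K_{|H|}$, since any two distinct elements of $H$ multiply to a non-identity involution. When $c \neq e$ but $c^2 = e$, every element of $V_c$ satisfies $x \neq x^{-1}$, and the pairs $\{x, x^{-1}\}$ are precisely the non-edges, so $V_c$ induces the cocktail-party graph ($K_{|H|}$ minus a perfect matching). When $c^2 \neq e$, the paired fibers $V_c \cup V_{c^{-1}}$ induce the complete bipartite graph $K_{|H|,|H|}$ with the perfect matching $x \leftrightarrow x^{-1}$ deleted (the crown graph). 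Each of these three families is integral: $K_n$ has spectrum $\{n-1,(-1)^{n-1}\}$, the cocktail-party graph on $2m$ vertices has spectrum $\{2m-2,0^{(m)},(-2)^{(m-1)}\}$, and the crown graph on $n+n$ vertices has spectrum $\{\pm(n-1),(\pm 1)^{(n-1)}\}$. Since the spectrum of a disjoint union is the multiset union of the constituent spectra, $\Gamma(G)$ is integral.

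I expect the main work to lie in establishing the block decomposition rigorously: verifying that edges occur only between $V_c$ and $V_{c^{-1}}$, that $c = e$ is the unique fiber yielding a complete graph (it is the only fiber on which inversion has fixed points, since $x = x^{-1}$ forces $x \in H$ hence $\psi(x)=e$), and that the induced structure in the remaining two cases is exactly ``complete, respectively complete bipartite, minus the inversion matching.'' Once this combinatorial skeleton is correct, matching each block to a named integral graph and invoking additivity of spectra over disjoint unions is routine; I would also record small sanity checks such as $\Gamma(\mathbb{Z}_4) = K_2 \sqcup \overline{K_2}$ and $\Gamma(\mathbb{Z}_4 \times \mathbb{Z}_2) = K_4 \sqcup C_4$ to confirm the bookkeeping.
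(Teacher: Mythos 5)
Your argument is correct, and it reaches the same three building blocks as the paper --- a complete graph on the involutions together with the identity, cocktail-party graphs on the order-$4$ elements, and crown graphs $K_{n,n}$ minus a perfect matching (which the paper writes as $\overline{K_n \square P_2}$) on the elements of order at least $8$ --- but it gets there by a genuinely different and cleaner route. The paper establishes the decomposition component by component: it proves via path-multiplication arguments (its Claims 1--6) that all vertices in a non-identity component share a common order, then reads off the adjacency matrix of each component in a suitable vertex ordering. You instead observe that, since $G$ is an abelian $2$-group, adjacency of distinct $x,y$ is exactly the condition $xy\in H\setminus\{e\}$ with $H=\{g: g^2=e\}$, and that this is governed by the squaring homomorphism $\psi(g)=g^2$: edges only join the fibers $V_c=\psi^{-1}(c)$ and $V_{c^{-1}}$, and within such a paired block the only missing edges are the inversion matching. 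This replaces the paper's inductive path arguments with a single structural fact about cosets of $\ker\psi$, and it makes the trichotomy ($c=e$, $c\in H\setminus\{e\}$, $c^2\neq e$) transparent; the small price is that your blocks are unions of connected components rather than the components themselves, which is harmless since integrality only requires a partition with no edges across parts. Both proofs then finish identically by quoting the integral spectra of $K_n$, the cocktail-party graph, and the crown graph and using additivity of the spectrum over disjoint unions. The one point worth making fully explicit in a write-up is that $\psi$ is a homomorphism and $H$ a subgroup precisely because $G$ is abelian --- this is where commutativity enters, just as it does in the paper's Claim 2.
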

\begin{proof}
From Theorem 2.7 in \cite{tapa1}, $\Gamma(G)$ is connected if and only if $G\cong \mathbb{Z}^n_2$. In this case, $\Gamma(G)$ is complete and have eigenvalues $2^n-1$ with multiplicity $1$ and $-1$ with multiplicity $2^n-1$. Then we consider the case when $\Gamma(G)$ is disconnected, i.e., $G\not\cong \mathbb{Z}^n_2$. It is enough to show that eigenvalues of each component of $\Gamma(G)$ are integers.
 
Let $S$ be the subset of $G$ consisting of elements of order $2$. As an element of order $2$ is not adjacent to any element of order $>2$, $A=S\cup \{e\}$ is a component of $\Gamma(G)$. Clearly any two vertices in $A$ are adjacent and hence the eigenvalues of $A$ are $|S|$ with multiplicity $1$ and $-1$ with multiplicity $|S|$. 

Now, we look into other components of $\Gamma(G)$. For this purpose, first we prove two claims that will be used later, which are as follows:\\ 
{\it Claim 1:} If $x \sim y$ in $\Gamma(G)$ and $\circ(x),\circ(y)\neq 2$, then $\circ(x)=\circ(y)$.\\
{\it Proof of Claim 1:} Suppose $\circ(x)=2^a$ and $\circ(y)=2^b$, where $a\neq b$. Without loss of generality, let $a<b$. Since $x\sim y$, we have $\circ(xy)=2$, i.e., $x^2=y^{-2}$, i.e., $e=x^{2^a}=y^{-2^a}$, which implies $\circ(y)\leq 2^a<2^b$, a contradiction. Hence the claim follows.\\
{\it Claim 2:} Let $B$ be a component in $\Gamma(G)$ and $e \not\in B$, i.e., $B\neq A$, then any two elements  in $B$ are of same order.\\
{\it Proof of Claim 2:} Let $x,y \in B$. Then there exists a path $x=x_0\sim x_1\sim x_2\sim \cdots \sim x_n=y$ joining $x$ and $y$. Thus, we have $\circ(x_0x_1)=\circ(x_1x_2)=\cdots=\circ(x_{n-1}x_n)=2$. So, multiplying them, we get $\circ((x_0x_n)(x^2_1x^2_2\cdots x^2_{n-1}))=2$. It is pertinent to recall that if the order of a product of two elements is $2$ in an abelian group, then either both of them has order $2$ or one of them has order $2$ and other is identity. If $\circ(xy)=\circ(x_0x_n)=1$, then $x$ and $y$, being mutually inverses, have same order. If $\circ(xy)=\circ(x_0x_n)=2$, then $x\sim y$. But as $x,y$ are not of order $2$, from Claim 1, $\circ(x)=\circ(y)$.

From Claim 2, we see that if two elements are in same component $B (\neq A)$, then they have the same order. However, elements of same order may belong to different components of $\Gamma(G)$. Let $B_1,B_2,\ldots,B_t$ be the components of $\Gamma(G)$ which consists of elements of order $4$.\\
{\it Claim 3:} If $x,y \in B_i$ with $x\neq y$ and $x\neq y^{-1}$, then $x\sim y$.\\
{\it Proof of Claim 3:} Since $x,y \in B_i$, then there exists a path $x=x_0 \sim x_1\sim x_2\sim \cdots \sim x_n=y$ joining $x$ and $y$, where $\circ(x_i)=4$ and $\circ(x_ix_{i+1})=2$ for $i=0,1,\ldots,n$. Also, we have $\circ(x_0x_1)=\circ(x_1x_2)=\cdots=\circ(x_{n-1}x_n)=2$, i.e., $\circ((x_0x_n)(x^2_1x^2_2\cdots x^2_{n-1}))=2$. Since, $\circ(x^2_i)=2$ and $x \neq y^{-1}$, we get $\circ(xy)=2$, i.e., $x\sim y$.\\
{\it Claim 4:} If $x,y \in B_i$, then $x\sim y$ if and only if $x\sim y^{-1}$.\\
{\it Proof of Claim 4:} It follows from the fact that  $x \sim y$ if and only if $\circ(xy)=2$, i.e., $(xy)^2=e$. i.e., $\circ(xy^{-1})=2$, i.e., $x \sim y^{-1}$.

From Claim 3 and 4, it follows that each $B_i$ contains even number of vertices, say $2m_i$ and if every element is paired with its inverse, then the adjacency matrix of $B_i$ takes the following form 
\begin{center}
         $  \mathcal{A}(B_i)=\begin{pmatrix}
               0 & 0 & \rvline  &  1 & 1 & \rvline & 1 & 1 & \rvline & \hdots & \rvline  & \hdots & \rvline  &  1 & 1 \\
               0 & 0 & \rvline & 1 & 1 & \rvline & 1 & 1 & \rvline & \hdots & \rvline  & \hdots & \rvline  & 1 & 1  \\
               \hline 1 & 1 & \rvline & 0 & 0 & \rvline & 1 & 1 & \rvline & \hdots & \rvline  & \hdots & \rvline   & 1 & 1 \\
               1 & 1 & \rvline & 0 & 0 & \rvline & 1 & 1 & \rvline &  \hdots & \rvline  & \hdots  & \rvline & 1 & 1\\
               \hline \vdots & \vdots & \rvline & \vdots &  \vdots & \rvline & \vdots & \vdots & \rvline & \ddots & \rvline  &   \ddots &  \rvline & \vdots & \vdots \\
               \hline \vdots & \vdots & \rvline & \vdots &  \vdots & \rvline & \vdots & \vdots & \rvline & \ddots & \rvline  &   \ddots & \rvline   & \vdots & \vdots \\
              
              \hline 1 & 1 & \rvline & 1 & 1 & \rvline & 1 & 1 & \rvline & \hdots & \rvline  & \hdots & \rvline & 0 & 0 \\
               1 & 1 & \rvline & 1 & 1 & \rvline & 1 & 1 & \rvline &  \hdots & \rvline  & \hdots & \rvline & 0 & 0 \\
           \end{pmatrix}.$
       \end{center}
From the matrix $\mathcal{A}(B_i)$, it is clear that $B_i$ is the complete $m_i$-partite graph with each partite-set consisting of two vertices, i.e.,  $B_i$ is the join $m_i$ copies of complements of $K_2$, i.e., $B_i=\overline{K_2} \vee \overline{K_2} \vee \cdots \vee \overline{K_2}$ ($m_i$ times). Hence, the eigenvalues of $B_i$ are $2m_i-2$ with multiplicity $1$, $0$ with multiplicity $m_i$ and $-2$ with multiplicity $m_i-1$, i.e., $\mathcal{A}(B_i)$ is integral.

Let $k\geq 3$ and $C_1,C_2,\ldots,C_l$ be the components of $\Gamma(G)$ which consists of elements of order $2^k$.\\
{\it Claim 5:} If $x,y \in C_i$ and $x\neq y$, then either $x\sim y$ or $x\sim y^{-1}$.\\
{\it Proof of Claim 5:} The proof follows in the same line as of the proof of Claim 3. \\
{\it Claim 6:} If $x\sim y_1$ and $x\sim y_2$ in $C_i$, then $y_1\nsim y_2$.\\
{\it Proof of Claim 6:} Since $\circ(xy_1)=\circ(xy_2)=2$, let $xy_1=z_1$ and $xy_2=z_2$ where $z_1$ and $z_2$ are elements of order $2$. Now, $xy_1xy_2=x^2y_1y_2=z_1z_2$ i.e. $y_1y_2=x^{-2}z_1z_2$ and $\circ(x^{-2}z_1z_2)\neq 2$. Hence, $\circ(y_1y_2)\neq 2$ i.e. $y_1\nsim y_2$.

Now, to get the adjacency matrix $\mathcal{A}(C_i)$ of size $2n_i\times 2n_i$ of $C_i$ in a suitable form, we choose $x \in C_i$ and index the rows of the matrix as $x,x^{-1},y^{-1}_1,y_1,y^{-1}_2,y_2,y^{-1}_3,y_3,\ldots$, where vertices are suitably renamed such that $x \sim y_i$ and $x \not\sim y^{-1}_i$, for all $i$. Then the matrix takes the following form:
\begin{center}
          $\mathcal{A}(C_i)=\begin{pmatrix}
              0 & 0 & \rvline & 0 & 1 & \rvline & 0 & 1 & \rvline & \hdots &  \rvline & \hdots  & \rvline & 0 & 1 \\
              
              0 & 0 & \rvline & 1 & 0 & \rvline & 1 & 0 & \rvline & \hdots & \rvline & \hdots  & \rvline & 1 & 0 \\
              \hline
              0 & 1 & \rvline & 0 & 0 & \rvline & 0 & 1 & \rvline & \hdots & \rvline & \hdots  & \rvline & 0 & 1 \\
              1 & 0 & \rvline & 0 & 0 & \rvline & 1 & 0 & \rvline & \hdots & \rvline & \hdots  & \rvline & 1 & 0 \\
              \hline
              \vdots & \vdots & \rvline & \vdots & \vdots & \rvline & \vdots & \vdots & \rvline & \ddots & \rvline & \ddots  & \rvline & \vdots & \vdots \\
               \hline
               \vdots & \vdots & \rvline & \vdots  & \vdots & \rvline & \vdots & \vdots & \rvline & \ddots & \rvline &\ddots  & \rvline & \vdots & \vdots \\
               \hline
              0 & 1 & \rvline & 0 & 1 & \rvline & 0 & 1 & \rvline & \hdots & \rvline & \hdots  & \rvline & 0 & 0 \\
             
              1 & 0 & \rvline & 1 & 0 & \rvline & 1 & 0 & \rvline & \hdots & \rvline & \hdots  & \rvline & 0 & 0 \\
              
          \end{pmatrix}.$
\end{center}
It can be easily observed that $\mathcal{A}(C_i)$ is the adjacency matrix of the graph  $\overline{K_n \square P_2}$. Now, as $K_n$ and $P_2$  are integral and $\overline{K_n \square P_2}$ is a regular graph, $\mathcal{A}(C_i)$ is also integral.

As all the components of $\Gamma(G)$ are integral, $\Gamma(G)$ is also integral.
 \end{proof}


We now introduce the notion of equitable partition of a graph and some related results, which will play an important role in proving our next result.

\begin{definition}(\cite{godsil-royle}, Section $9.3$)
Let $\Gamma$ be a graph of order $n$. A partition $\pi=\lbrace{C_1,C_2,\ldots,C_m}\rbrace$ of the vertex set $V(\Gamma)$ of $\Gamma$ is said to be equitable if for every pair of indices (not necessarily distinct) $i,j\in\lbrace{1,2,\ldots,m}\rbrace$, there is a non-negative integer $b_{i,j}$ such that each vertex $v$ in $C_i$ has exactly $b_{i,j}$ neighbours in $C_j$, irrespective of the choice of $v$.
\end{definition}

If $\pi=\lbrace{C_1,C_2,\ldots,C_m}\rbrace$ is an equitable partition of $\Gamma$, instead of $b_{i,j}$, we use the notation $[C_i,C_j]$ to denote the number of neighbours of each vertex of $C_i$ in $C_j$. For every pair $i,j$ this number is constant. Also, in general $[C_i,C_j]\ne[C_j,C_i]$.

\begin{definition}(\cite{godsil-royle}, Section $9.3$)
Let $\pi=\lbrace{C_1,C_2,\ldots,C_m}\rbrace$ be an equitable partition of a graph $\Gamma$. The matrix $\mathcal{Q}(\Gamma/\pi):=(b_{i,j})_{1\le i,j\le m}$ is said to be the quotient matrix or the partition matrix of $\Gamma$ relative to $\pi$.
\end{definition}

Unlike, the adjacency matrix, the quotient matrix of $\Gamma$ relative to an equitable partition may not be symmetric in general, since $b_{i,j}\ne b_{j,i}$.

\begin{theorem}(\cite{godsil-royle}, Theorem $9.3.3$)\label{char-poly-quotient-mat-div-graph}
Let $\pi$ be an equitable partition of a graph $\Gamma$. Then the characteristic polynomial of $\mathcal{Q}(\Gamma/\pi)$ divides the characteristic polynomial of $\mathcal{A}(\Gamma)$, where $\mathcal{A}(\Gamma)$ denotes the adjacency matrix of $\Gamma$.
\end{theorem}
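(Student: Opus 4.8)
The plan is to encode the equitable partition $\pi$ by its characteristic matrix and to establish an \emph{intertwining relation} between $\mathcal{A}(\Gamma)$ and $\mathcal{Q}(\Gamma/\pi)$ that exhibits the column space of the characteristic matrix as an $\mathcal{A}(\Gamma)$-invariant subspace on which $\mathcal{A}(\Gamma)$ acts as $\mathcal{Q}(\Gamma/\pi)$. Concretely, write $A=\mathcal{A}(\Gamma)$ and $B=\mathcal{Q}(\Gamma/\pi)$, and let $P$ be the $n\times m$ characteristic matrix of $\pi$, defined by $P_{v,i}=1$ if $v\in C_i$ and $P_{v,i}=0$ otherwise. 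Since the cells $C_1,\ldots,C_m$ are nonempty and pairwise disjoint, the columns of $P$ are nonzero with disjoint supports, hence linearly independent; thus $P$ has rank $m$, and its columns span an $m$-dimensional subspace $U=\mathrm{col}(P)$ of $\mathbb{R}^n$.

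First I would prove the key identity $AP=PB$. The $(v,j)$-entry of $AP$ equals $\sum_{w}A_{v,w}P_{w,j}=\sum_{w\in C_j}A_{v,w}$, which is exactly the number of neighbours of $v$ lying in $C_j$. If $v\in C_i$, then by equitability this number is $b_{i,j}$, independent of the choice of $v$ in $C_i$. On the other hand, the $(v,j)$-entry of $PB$ is $\sum_{k}P_{v,k}B_{k,j}=B_{i,j}=b_{i,j}$, since $v$ lies in the single cell $C_i$. Hence the two matrices agree entrywise and $AP=PB$.

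With this identity in hand, I would finish as follows. The relation $AP=PB$ shows that $AU\subseteq U$, so $U$ is $A$-invariant. Take the basis $\{Pe_1,\ldots,Pe_m\}$ of $U$ given by the columns of $P$. Since $A(Pe_i)=P(Be_i)=\sum_k b_{k,i}(Pe_k)$, the matrix of the restriction $A|_U$ in this basis is precisely $B$. Extending this basis to a basis of $\mathbb{R}^n$ brings $A$ to block upper-triangular form whose leading block is similar to $B$, whence $\det(xI_n-A)=\det(xI_m-B)\cdot g(x)$ for some polynomial $g$. Therefore the characteristic polynomial of $B$ divides that of $A$.

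The main obstacle is less computational than conceptual: the quotient matrix $B$ need not be symmetric, so one cannot simply diagonalize or argue via an orthonormal change of basis that keeps everything symmetric throughout. The invariant-subspace and block-triangularization argument sidesteps this, since it relies only on the fact that the operator $A|_U$ is represented by $B$ in the basis coming from $P$, and hence has the same characteristic polynomial as $B$. (If a symmetric reduction is preferred, one may instead rescale the columns of $P$ to be orthonormal, which symmetrizes the quotient via $D^{1/2}BD^{-1/2}$ with $D=\mathrm{diag}(|C_1|,\ldots,|C_m|)$, but this refinement is not needed for the divisibility claim.)
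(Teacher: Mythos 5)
The paper does not prove this statement---it is quoted directly from Godsil and Royle (Theorem 9.3.3) and used as a black box. Your argument is correct and is precisely the standard textbook proof: the intertwining identity $AP=PB$ for the characteristic matrix $P$, the observation that $\mathrm{col}(P)$ is $A$-invariant with $A|_{\mathrm{col}(P)}$ represented by $B$ in the basis of columns of $P$, and the block-triangularization giving $\det(xI-A)=\det(xI-B)\,g(x)$; the closing remark about symmetrizing $B$ to $D^{1/2}BD^{-1/2}$ is also accurate, though not needed.
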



\begin{lemma}\label{Zp-irr-eval-lemma}
The graph $\Gamma(\mathbb{Z}_p)$ has irrational eigenvalues, for $p\ne 2$.
\end{lemma}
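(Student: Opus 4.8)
The plan is to pin down the structure of $\Gamma(\mathbb{Z}_p)$ explicitly and then read off an irrational eigenvalue from a $2\times 2$ quotient matrix via Theorem \ref{char-poly-quotient-mat-div-graph}. First I would note that in $\mathbb{Z}_p$ every nonidentity element has order $p$, so two distinct vertices $x,y$ are adjacent precisely when $x+y \not\equiv 0 \pmod p$. Hence the identity $0$ is adjacent to every other vertex, while for $x\neq 0$ the only nonneighbour among the remaining vertices is its inverse $-x$; since $p$ is odd, $x\neq -x$, so the nonidentity vertices fall into $(p-1)/2$ inverse-pairs, each pair giving the unique nonadjacency. Thus $\Gamma(\mathbb{Z}_p)$ is $K_p$ with a perfect matching deleted from its $p-1$ nonidentity vertices.

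Next I would take the partition $\pi=\{C_1,C_2\}$ with $C_1=\{0\}$ and $C_2=\mathbb{Z}_p\setminus\{0\}$. This is equitable: the vertex $0$ has all $p-1$ of its neighbours in $C_2$, each nonidentity vertex has its single neighbour $0$ in $C_1$, and each nonidentity $x$ is adjacent to every element of $C_2$ except $x$ and $-x$, hence has exactly $p-3$ neighbours in $C_2$. The quotient matrix is therefore
\[
\mathcal{Q}(\Gamma(\mathbb{Z}_p)/\pi)=\begin{pmatrix} 0 & p-1 \\ 1 & p-3 \end{pmatrix},
\]
with characteristic polynomial $\lambda^2-(p-3)\lambda-(p-1)$ and roots $\tfrac{(p-3)\pm\sqrt{(p-1)^2+4}}{2}$. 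By Theorem \ref{char-poly-quotient-mat-div-graph} this polynomial divides the characteristic polynomial of $\mathcal{A}(\Gamma(\mathbb{Z}_p))$, so both roots are genuine eigenvalues of the graph.

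It then remains to show these roots are irrational, which reduces to verifying that the discriminant $(p-1)^2+4$ is not a perfect square. This is the only step that is not purely mechanical, and I expect it to be the main (though modest) obstacle. I would settle it by squeezing: writing $n=p-1\geq 2$ (as $p$ is an odd prime), one has $n^2<n^2+4<(n+1)^2$, the right inequality holding because $2n+1>4$ for $n\geq 2$; so $n^2+4$ lies strictly between consecutive squares and cannot itself be a square. Hence $\sqrt{(p-1)^2+4}$ is irrational, the two eigenvalues above are irrational, and the lemma follows.
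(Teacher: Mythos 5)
Your proof is correct and follows essentially the same route as the paper: the same equitable partition into $\{0\}$ and the nonidentity elements, the same quotient matrix $\begin{pmatrix} 0 & p-1 \\ 1 & p-3 \end{pmatrix}$, and the same appeal to Theorem \ref{char-poly-quotient-mat-div-graph}. In fact you supply one detail the paper merely asserts, namely the squeeze argument $n^2 < n^2+4 < (n+1)^2$ showing the discriminant $(p-1)^2+4 = p^2-2p+5$ is never a perfect square.
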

\begin{proof}
The group $\mathbb{Z}_p$ has elements of order $1$ and $p$. We divide the elements of $\mathbb{Z}_p$ into two disjoint sets of elements with respect to their orders. Let $\pi =\{A_1,A_p\}$ be the partition of elements of order $1$ and $p$ in $\mathbb{Z}_p$, respectively. It is easy to verify that $\pi$ forms an equitable partition of $\Gamma(\mathbb{Z}_p)$. Clearly, $[A_1,A_1]=0$. Since the identity element is adjacent to each element of order $p$, we have $[A_1,A_p]=p-1$ and $[A_p,A_1]=1$. Finally, $[A_p,A_p]=p-3$, since an element of order $p$ is adjacent to every other element of order $p$ except for itself and its inverse. Hence, we form the quotient matrix with respect to the partition $\pi$ which is
$$
 \mathcal{Q}(\Gamma(\mathbb{Z}_{p})/\pi) =
 \begin{array}{cc}
 \begin{matrix} & A_1 & & A_p \end{matrix} \\
 \begin{matrix} A_1 \\ A_p \end{matrix} 
 \left(
 \begin{array}{cc|cc}
 0 & & p-1 \\
 \hline
 1 & & p-3
 \end{array}
 \right)
 \end{array}
 $$

The characteristic polynomial of this matrix is $x^2-x(p-3)-(p-1)=0$, whose roots are $\frac{(p-3)\pm\sqrt{p^2-2p+5}}{2}$, which are always irrational for any prime $p$. 
By Theorem \ref{char-poly-quotient-mat-div-graph}, it follows that $\Gamma(\mathbb{Z}_p)$ has irrational eigenvalues.
\end{proof}

\begin{theorem}
The graph $\Gamma(\mathbb{Z}_{p^n})$ has irrational eigenvalues, for $p\ne 2$ and $n\in\mathbb{N}$.
\end{theorem}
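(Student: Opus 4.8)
The plan is to generalize the equitable-partition argument of Lemma \ref{Zp-irr-eval-lemma}. Writing $\mathbb{Z}_{p^n}$ additively, the only prime dividing $|G|$ is $p$, so two distinct elements $x,y$ are adjacent precisely when $\circ(x+y)=p$, that is, when $x+y$ lies in the unique subgroup of order $p$, namely $\langle p^{n-1}\rangle\setminus\{0\}$. I would partition the vertex set according to order, taking $\pi=\{A_1,A_p,A_{p^2},\dots,A_{p^n}\}$, where $A_{p^i}$ denotes the set of elements of order $p^i$. The substance of the proof is then to verify that $\pi$ is equitable and to read off its quotient matrix.

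To compute the entries $[A_{p^i},A_{p^j}]$, I fix $x$ with $\circ(x)=p^i$ and observe that its neighbours are exactly the elements $y=z-x$ as $z$ ranges over the $p-1$ elements of order $p$, subject to $y\neq x$. The key group-theoretic input is the standard fact that in a finite abelian $p$-group, $\circ(a)\neq\circ(b)$ forces $\circ(a+b)=\max\{\circ(a),\circ(b)\}$. For $i\ge 2$, every order-$p$ element $z$ satisfies $\circ(z)<\circ(x)$, so $\circ(z-x)=p^i$; moreover $z=2x$ is impossible since $\circ(2x)=p^i\neq p$ (here $p$ odd is used), so nothing is lost to the constraint $y\neq x$, and such an $x$ keeps all $p-1$ neighbours inside its own class. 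This yields $[A_{p^i},A_{p^i}]=p-1$ and $[A_{p^i},A_{p^j}]=0$ for $j\neq i$. For $i=0$ and $i=1$ the counts coincide with those already found in Lemma \ref{Zp-irr-eval-lemma}, namely $[A_1,A_1]=0$, $[A_1,A_p]=p-1$, $[A_p,A_1]=1$, and $[A_p,A_p]=p-3$: for a vertex of order $p$, the value $z=x$ contributes the neighbour $0\in A_1$, while $z=2x$ contributes the excluded element $x$.

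Assembling these counts, the quotient matrix $\mathcal{Q}(\Gamma(\mathbb{Z}_{p^n})/\pi)$ becomes block diagonal, consisting of a top-left $2\times 2$ block $\left(\begin{smallmatrix} 0 & p-1 \\ 1 & p-3 \end{smallmatrix}\right)$ on $\{A_1,A_p\}$ together with a diagonal block carrying the entry $p-1$ in each of the $n-1$ positions $A_{p^2},\dots,A_{p^n}$. Its characteristic polynomial therefore factors as $\big(x^2-(p-3)x-(p-1)\big)\,(x-(p-1))^{\,n-1}$. The quadratic factor is identical to the one appearing in Lemma \ref{Zp-irr-eval-lemma}, whose roots $\tfrac{(p-3)\pm\sqrt{p^2-2p+5}}{2}$ are irrational for every prime $p$. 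Finally, by Theorem \ref{char-poly-quotient-mat-div-graph} the characteristic polynomial of the quotient matrix divides that of $\mathcal{A}(\Gamma(\mathbb{Z}_{p^n}))$, so these irrational numbers are genuine eigenvalues of $\Gamma(\mathbb{Z}_{p^n})$; note that for $n=1$ the diagonal block is empty and the statement degenerates to Lemma \ref{Zp-irr-eval-lemma}.

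I expect the main obstacle to be the verification that $\pi$ is equitable, that is, confirming that each count $[A_{p^i},A_{p^j}]$ is independent of the chosen representative $x$. This rests entirely on the order-of-sum dichotomy in abelian $p$-groups, and the only delicate bookkeeping is the correct treatment of the self-exclusion $y\neq x$ for the low-order classes $A_1$ and $A_p$, where the odd-prime hypothesis ($2x\neq 0$) is genuinely used.
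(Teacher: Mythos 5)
Your argument is correct and rests on the same machinery as the paper's proof --- the order partition of $\mathbb{Z}_{p^n}$, its quotient matrix, and Theorem \ref{char-poly-quotient-mat-div-graph} --- but you execute it differently. The paper argues by induction on $n$: it verifies only that $[A_{p^i},A_{p^n}]=[A_{p^n},A_{p^i}]=0$ for $i<n$ (via the observation that $\circ(x)\ne\circ(y)$ forces $\circ(xy)\ne p$), concludes that $\mathcal{Q}(\Gamma(\mathbb{Z}_{p^{n-1}})/\pi)$ sits as a direct summand of $\mathcal{Q}(\Gamma(\mathbb{Z}_{p^n})/\pi')$, and pushes the irrational eigenvalue up from Lemma \ref{Zp-irr-eval-lemma}. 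You instead compute every entry of the quotient matrix in a single pass, using the order-of-sum dichotomy in an abelian $p$-group together with the invertibility of $2$ modulo $p^n$ to handle the self-exclusion $y\ne x$, and you arrive at the explicit block-diagonal form with the $2\times 2$ block $\left(\begin{smallmatrix} 0 & p-1\\ 1 & p-3\end{smallmatrix}\right)$ and $n-1$ diagonal entries equal to $p-1$; your entry counts are all correct. What this buys you is the full characteristic polynomial $\bigl(x^2-(p-3)x-(p-1)\bigr)(x-(p-1))^{n-1}$ of the quotient matrix, hence more spectral information than the paper extracts; what the paper's induction buys is lighter bookkeeping, since it never needs to evaluate $[A_{p^i},A_{p^i}]$ for $i\ge 2$. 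Both proofs ultimately hinge on the same quadratic, whose discriminant $p^2-2p+5=(p-1)^2+4$ is never a perfect square, so the conclusion stands.
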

\begin{proof}
We show that the quotient matrix of $\Gamma(\mathbb{Z}_{p^n})$ with respect to the order partition of its elements has irrational eigenvalues. To prove this, we use induction on $n$. The base case is true by Lemma \ref{Zp-irr-eval-lemma}. Suppose the quotient matrix of $\Gamma(\mathbb{Z}_{p^{n-1}})$ with respect to the order partition $\pi=\lbrace{A_1,A_p,\ldots,A_{p^{n-1}}}\rbrace$ of the elements of $\mathbb{Z}_{p^{n-1}}$ has irrational eigenvalues. We claim that if $x,y\in\mathbb{Z}_{p^n}$ be such that $\circ(x)=p^i$ and $\circ(y)=p^j$ for some $0\le i,j\le n$ with $i\ne j$, then $\circ(xy)$ cannot be $p$. To establish this, we suppose $\circ(xy)=p$. Then $(xy)^p=1$, i.e., $x^p=(y^p)^{-1}$. Now, $\circ(x)=p^i$ implies $\circ(x^p)=p^{i-1}$ and $\circ(y)=p^j$ implies $\circ(y^p)=p^{j-1}$. Thus, $p^{i-1}=p^{j-1}$, which implies $i=j$, a contradiction. Hence, $[A_{p^i},A_{p^n}]=[A_{p^n},A_{p^i}]=0$, for $0\le i\le n-1$. Thus, the quotient matrix for $\Gamma(\mathbb{Z}_{p^n})$ with respect to the partition $\pi'=\pi\cup\lbrace{A_{p^n}}\rbrace$ becomes,
$$
 \mathcal{Q}(\Gamma(\mathbb{Z}_{p^n})/\pi') =
 \begin{array}{ccc}
  \begin{matrix} & & ~~~~A_1 & A_p & & \cdots & & A_{p^{n-1}}~~ A_{p^n} \end{matrix} \\
 \begin{matrix} A_1 \\ A_p \\ \\ \vdots \\ \\ A_{p^{n-1}} \\ \\ A_{p^n} \end{matrix} 
 \left(
 \begin{array}{ccccc|cc}
 & & & &\\
 & & & &\\
 & & & &\\
 & & \mathcal{Q}(\Gamma(\mathbb{Z}_{p^{n-1}})/\pi) & & & & \mathbf{0}\\
 & & & &\\
 & & & & \\
 & & & &\\
 \hline
 & & & &\\
 & & \mathbf{0} & & & \\
 \end{array}
 \right)
 \end{array}.
 $$

This shows that the eigenvalues of $\mathcal{Q}(\Gamma(\mathbb{Z}_{p^{n-1}})/\pi)$ are contained in the set of eigenvalues of $\mathcal{Q}(\Gamma(\mathbb{Z}_{p^n})/\pi')$. By induction hypothesis, it follows that $\mathcal{Q}(\Gamma(\mathbb{Z}_{p^n})/\pi')$ has irrational eigenvalues and eventually $\Gamma(\mathbb{Z}_{p^n})$ has irrational eigenvalues.
\end{proof}


\section{Laplacian Spectrum of $\Gamma(G)$}\label{lap-spectrum}
The Laplacian spectrum of a graph $\Gamma$ is the multiset of its Laplacian eigenvalues and is denoted by
$$\mathcal{L}\mbox{-}spec(\Gamma)=\begin{pmatrix}
\lambda_1 & \lambda_2 & \cdots & \lambda_k\\
r_1 & r_2 & \cdots &r_k
\end{pmatrix},$$
where the first row denotes the distinct Laplacian eigenvalues of $\Gamma$ and the second row denotes their respective multiplicities. In this section, we investigate the Laplacian spectrum of $\Gamma(G)$, where $G$ is an abelian group. To begin with, first we recall the notion of atoms and some basics of character theory of a finite group $G$. For more theory on representation and character theory of finite
groups, one may refer to \cite{martin-isaacs,steinberg}.

Let $G$ be a finite group. For $g\in G$, the \textit{atom} \cite{alperin-peterson} of $g$ is the set $[g]=\lbrace{x\in G:\langle{g}\rangle=\langle
{x}\rangle}\rbrace$ or equivalently, $[g]=\lbrace{g^k:\operatorname{gcd}(k,\circ(g))=1}\rbrace$. Clearly, for two elements $g,h\in G$, either $[g]=[h]$ or $[g]\cap[h]=\emptyset$. The set of all (non-equivalent) irreducible characters of $G$ is denoted by $\operatorname{Irr}(G)$. A character $\chi\in\operatorname{Irr}(G)$ is said to be \textit{real} if $\chi(g)\in\mathbb{R}$, for all $g\in G$. $\chi$ is said to be \textit{non-real} if $\chi(g)\notin\mathbb{R}$, for some $g\in G$. The character given by $\chi(g)=1$ for all $g\in G$ is said to be the \textit{trivial} character of $G$. For a set $S\subseteq G$ and for $\chi\in\operatorname{Irr}(G)$, let $\chi(S):=\sum_{s\in S}\chi(s)$. $S$ is called an $\textit{integral}$ set \cite{alperin-peterson} if $\chi(S)\in\mathbb{Z}$ for every $\chi\in\operatorname{Irr}(G)$. By convention, $\chi(\emptyset)=0$, for any $\chi\in\operatorname{Irr}(G)$. For any two groups $G_1$ and $G_2$, the irreducible characters of $G_1\times G_2$ are given by $\operatorname{Irr}(G_1\times G_2)=\lbrace{\chi\times\phi:\chi\in\operatorname{Irr}(G_1),\phi\in\operatorname{Irr}(G_2)}\rbrace$, where $(\chi\times\phi)(g_1,g_2)=\chi(g_1)\phi(g_2)$, for all $g_1\in G_1$ and $g_2\in G_2$.

We also recall the definition of Cayley sum graph of a group which will be used while computing the Laplacian spectrum of $\Gamma(G)$.

\begin{definition}
    For a finite abelian group $G$ and a subset $S$ of G, the Cayley sum graph $Cay^+(G, S)$
of $G$ with respect to $S$ is a graph with vertex set $G$ and two vertices $g$ and $h$ are
joined by an edge if and only if $g + h \in S$.
\end{definition}

We now state below two important results which will be useful in proving our theorem.

\begin{lemma}(\cite{alperin-peterson}, Proposition $4.1$)\label{chi(S)}
Let $G$ be a finite group and $g\in G$. Then for any $\chi\in\operatorname{Irr}(G)$, $\chi([g])\in\mathbb{Z}$. In other words, any atom of a finite group $G$ is an integral subset of $G$.
\end{lemma}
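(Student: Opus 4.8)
The plan is to identify the character sum $\chi([g])$ with a field trace in a cyclotomic field and then deduce integrality from Galois invariance together with the fact that character values are algebraic integers. Set $n=\circ(g)$, and fix a primitive $n$-th root of unity $\omega=e^{2\pi i/n}$. Recall that the atom can be written as $[g]=\lbrace g^k : 1\le k\le n,\ \gcd(k,n)=1\rbrace$, so that $|[g]|=\phi(n)$, and $\chi([g])=\sum_{\gcd(k,n)=1}\chi(g^k)$.

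First I would fix an irreducible representation $\rho$ affording $\chi$. Since $g^n=e$, the matrix $\rho(g)$ satisfies $\rho(g)^n=I$, so it is diagonalizable with eigenvalues that are $n$-th roots of unity; hence $\chi(g)=\sum_j\omega^{a_j}$ for suitable integers $a_j$, and in particular $\chi(g)\in\mathbb{Z}[\omega]$ is an algebraic integer. More generally, $\chi(g^k)=\sum_j\omega^{ka_j}$ for every $k$.

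The key step is to recognize the action of the Galois group $\operatorname{Gal}(\mathbb{Q}(\omega)/\mathbb{Q})\cong(\mathbb{Z}/n\mathbb{Z})^\times$, whose elements are the automorphisms $\sigma_k\colon\omega\mapsto\omega^k$ indexed by the residues $k$ with $\gcd(k,n)=1$. Applying $\sigma_k$ to $\chi(g)$ yields $\sigma_k(\chi(g))=\sum_j\omega^{ka_j}=\chi(g^k)$. Summing over the Galois group therefore gives
\[
\chi([g])=\sum_{\substack{1\le k\le n\\ \gcd(k,n)=1}}\chi(g^k)=\sum_{\sigma\in\operatorname{Gal}(\mathbb{Q}(\omega)/\mathbb{Q})}\sigma(\chi(g))=\operatorname{Tr}_{\mathbb{Q}(\omega)/\mathbb{Q}}\bigl(\chi(g)\bigr).
\]

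To finish, I would observe that $\chi([g])$ is fixed by every $\sigma_m$ with $\gcd(m,n)=1$, since such a $\sigma_m$ merely permutes the summands $\chi(g^k)$ according to $k\mapsto km$, which runs over the units modulo $n$ as $k$ does; hence $\chi([g])\in\mathbb{Q}$. On the other hand, $\chi([g])$ is a finite sum of algebraic integers (each $\chi(g^k)$ being a sum of roots of unity), so it is itself an algebraic integer. A rational algebraic integer lies in $\mathbb{Z}$, giving $\chi([g])\in\mathbb{Z}$, as required. No serious obstacle arises here; the only point demanding care is the verification that $\sigma_k(\chi(g))=\chi(g^k)$, which rests precisely on the diagonalizability of $\rho(g)$ and on its eigenvalues being genuine $n$-th roots of unity.
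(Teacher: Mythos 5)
The paper does not prove this lemma; it is quoted verbatim from Alperin and Peterson (Proposition 4.1 of \cite{alperin-peterson}), so there is no in-paper argument to compare against. Your proof is correct and complete: the identification $\sigma_k(\chi(g))=\chi(g^k)$ via the diagonalizability of $\rho(g)$ is the crux, and from it $\chi([g])=\operatorname{Tr}_{\mathbb{Q}(\omega)/\mathbb{Q}}(\chi(g))$ is Galois-invariant (hence rational) and an algebraic integer (hence in $\mathbb{Z}$). This is the standard argument and, in substance, the one given in the cited reference; the only cosmetic shortcut available is to invoke directly that the field trace maps algebraic integers of $\mathbb{Q}(\omega)$ into $\mathbb{Z}$, which collapses your last paragraph into one line.
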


\begin{theorem}\label{abel-Cay-sum-spec-theorem}(\cite{nica}, Theorem 2.1)
Let $G$ be a finite abelian group and let $S\subseteq G$. Then the Laplacian eigenvalues of $\operatorname{Cay}^{+}(G,S)$, the Cayley sum graph of $G$ with respect to $S$, are given as follows:
\begin{enumerate}
\item[(i)] $|S|-\chi(S)$ for each real character $\chi$ of $G$,
\item[(ii)] $|S|\pm|\chi(S)|$ for each conjugate pair of non-real characters $\lbrace{\chi,\overline{\chi}}\rbrace$ of $G$.
\end{enumerate}
\end{theorem}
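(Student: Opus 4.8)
The plan is to diagonalise the Laplacian through the character theory of the finite abelian group $G$, exactly the tool already assembled in this section. Index the rows and columns of the adjacency matrix $A$ of $\operatorname{Cay}^{+}(G,S)$ by the elements of $G$, so that $A_{g,h}=\mathbf{1}_S(g+h)$, where $\mathbf{1}_S$ is the indicator function of $S$. Since the map $h\mapsto g+h$ is a bijection of $G$, every row sum of $A$ equals $\sum_{h\in G}\mathbf{1}_S(g+h)=|S|$; thus the graph is $|S|$-regular and its Laplacian is simply $L=|S|I-A$. Consequently the whole problem reduces to computing the spectrum of $A$, after which each adjacency eigenvalue $\mu$ contributes a Laplacian eigenvalue $|S|-\mu$.

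For each $\chi\in\operatorname{Irr}(G)$ let $v_\chi\in\mathbb{C}^{G}$ be the vector with entries $(v_\chi)_g=\chi(g)$. Because $G$ is abelian these $|G|$ vectors form an orthogonal basis of $\mathbb{C}^{G}$. The key computation is to apply $A$ to $v_\chi$: reindexing the sum by $s=g+h$ gives
$$(Av_\chi)_g=\sum_{h\in G}\mathbf{1}_S(g+h)\,\chi(h)=\sum_{s\in S}\chi(s)\,\chi(-g)=\chi(S)\,\overline{\chi(g)},$$
since $\chi(-g)=\chi(g)^{-1}=\overline{\chi(g)}$ for a character of a finite group. Hence $Av_\chi=\chi(S)\,v_{\overline{\chi}}$. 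The crucial and slightly unusual feature -- caused by the ``$+$'' rather than the ``$-$'' in the adjacency rule -- is that $A$ does not fix the line spanned by $v_\chi$, but instead carries it onto the line spanned by the conjugate vector $v_{\overline{\chi}}$.

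It then suffices to split according to whether $\chi$ is real. If $\chi=\overline{\chi}$, then $Av_\chi=\chi(S)\,v_\chi$, so $v_\chi$ is an eigenvector of $A$ with (necessarily real) eigenvalue $\chi(S)$, yielding the Laplacian eigenvalue $|S|-\chi(S)$ of part (i). If $\chi\neq\overline{\chi}$, then, using also $Av_{\overline{\chi}}=\overline{\chi(S)}\,v_\chi$, the operator $A$ preserves the two--dimensional space $\langle v_\chi,v_{\overline{\chi}}\rangle$ and acts on the basis $(v_\chi,v_{\overline{\chi}})$ by the off--diagonal matrix $\left(\begin{smallmatrix}0 & \overline{\chi(S)}\\ \chi(S) & 0\end{smallmatrix}\right)$, whose eigenvalues are $\pm|\chi(S)|$; passing to $L=|S|I-A$ gives the pair $|S|\pm|\chi(S)|$ of part (ii). Since distinct character vectors are orthogonal, these one-- and two--dimensional $A$--invariant subspaces are mutually orthogonal, and a dimension count (one dimension per real character, two per conjugate pair of non-real characters) shows they exhaust $\mathbb{C}^{G}$, so no eigenvalue is missed.

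The main obstacle is exactly the mixing phenomenon above: unlike an ordinary Cayley graph, where the character vectors are genuine eigenvectors and the eigenvalues are read off directly as $\chi(S)$, here $A$ interchanges $v_\chi$ and $v_{\overline{\chi}}$, so the non-real characters must be handled in conjugate pairs via the $2\times2$ block, and one must verify that this produces the real eigenvalues $\pm|\chi(S)|$. A secondary point requiring care is the diagonal of $A$: the definition allows a loop at $g$ whenever $2g\in S$, and it is the inclusion of these loop entries that keeps the row sums constant at $|S|$ and makes $L=|S|I-A$ legitimate.
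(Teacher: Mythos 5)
The paper does not actually prove this statement; it is quoted verbatim from Theorem 2.1 of the cited reference \cite{nica}, so there is no internal proof to compare your argument against. Your proof is correct and is the standard one: the identity $L=|S|I-A$ with $A_{g,h}=\mathbf{1}_S(g+h)$ (you rightly flag that the diagonal entries $\mathbf{1}_S(2g)$ exactly compensate the degree deficit at those vertices, which is the one point where a careless argument would fail, since the simple graph is not regular), the relation $Av_\chi=\chi(S)\,v_{\overline{\chi}}$, the resulting $2\times 2$ blocks for conjugate pairs of non-real characters with eigenvalues $\pm|\chi(S)|$, and the orthogonality-plus-dimension count showing these invariant subspaces exhaust $\mathbb{C}^{G}$. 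Nothing is missing.
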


\begin{theorem}\label{abelian-laplacian-integral}
The prime order element graph $\Gamma(G)$ is Laplacian integral, for any finite abelian group $G$.
\end{theorem}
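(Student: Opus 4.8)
The plan is to realize $\Gamma(G)$ as essentially a Cayley sum graph and then combine Theorem \ref{abel-Cay-sum-spec-theorem} with the integrality of atoms. Write $G$ additively and let $S=\lbrace g\in G:\circ(g)\text{ is prime}\rbrace$ be the set of prime-order elements. Two distinct vertices $x,y$ satisfy $x\sim y$ in $\Gamma(G)$ precisely when $\circ(x+y)$ is prime, i.e.\ when $x+y\in S$. Hence $\Gamma(G)$ is exactly the Cayley sum graph $\operatorname{Cay}^{+}(G,S)$ with its loops deleted; a loop of $\operatorname{Cay}^{+}(G,S)$ sits at $g$ iff $2g\in S$, and such $g$ really do occur in general (for instance in $\mathbb{Z}_4$), so the two graphs are not literally equal and this discrepancy must be handled.

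The first technical step is to show the loops do not affect the Laplacian. For each $g$ the set $\lbrace h:g+h\in S\rbrace=S-g$ has size $|S|$, so every row sum of the loop-inclusive adjacency matrix $\widetilde{A}$ of $\operatorname{Cay}^{+}(G,S)$ equals $|S|$; equivalently, the matrix whose spectrum Theorem \ref{abel-Cay-sum-spec-theorem} describes is $|S|\,I-\widetilde{A}$. Deleting loops replaces $\widetilde{A}$ by $A=\widetilde{A}-\operatorname{Diag}(d)$, where $d_g=1$ iff $2g\in S$, and simultaneously lowers the degree of $g$ by $d_g$, so the degree matrix of $\Gamma(G)$ is $D=|S|\,I-\operatorname{Diag}(d)$. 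Consequently the Laplacian of $\Gamma(G)$ is $D-A=|S|\,I-\widetilde{A}$, coinciding exactly with the matrix of Theorem \ref{abel-Cay-sum-spec-theorem}. It therefore suffices to prove that every eigenvalue listed there is an integer.

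The crux is that $S$ is an integral subset of $G$. If $\circ(g)$ is a prime $p$, then $\langle g\rangle\cong\mathbb{Z}_p$ and every nonidentity element of $\langle g\rangle$ has order $p$, so the atom $[g]$ is contained in $S$; hence $S$ is a disjoint union of atoms. By Lemma \ref{chi(S)} each atom is integral, and since $\chi(S)=\sum_i\chi([g_i])$ over the atoms comprising $S$, we obtain $\chi(S)\in\mathbb{Z}$ for every $\chi\in\operatorname{Irr}(G)$, including the non-real characters. Feeding this into Theorem \ref{abel-Cay-sum-spec-theorem}, the eigenvalues $|S|-\chi(S)$ attached to real characters are integers, and for a conjugate pair of non-real characters the quantity $|\chi(S)|$ is the absolute value of an integer, so $|S|\pm|\chi(S)|$ are integers as well. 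Thus all Laplacian eigenvalues of $\Gamma(G)$ are integers.

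I expect the main obstacle to be the bookkeeping of the second paragraph: one must be careful that $\operatorname{Cay}^{+}(G,S)$ genuinely carries loops and that the formula of Theorem \ref{abel-Cay-sum-spec-theorem} refers to $|S|\,I-\widetilde{A}$, so that deleting the loops --- which changes both $A$ and $D$ --- leaves $D-A$ invariant. Once this identification is secured, the integrality of $S$ through its decomposition into atoms makes the spectral conclusion immediate.
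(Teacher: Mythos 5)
Your proof is correct and follows essentially the same route as the paper: identify $\Gamma(G)$ with $\operatorname{Cay}^{+}(G,S)$ for $S$ the set of prime-order elements, decompose $S$ into atoms to get $\chi(S)\in\mathbb{Z}$ via Lemma \ref{chi(S)}, and conclude by Theorem \ref{abel-Cay-sum-spec-theorem}. The only difference is that you explicitly verify that deleting the loops of $\operatorname{Cay}^{+}(G,S)$ (at vertices $g$ with $2g\in S$) leaves the Laplacian $D-A=|S|\,I-\widetilde{A}$ unchanged --- a genuine subtlety that the paper compresses into ``it is easy to observe that $\Gamma(G)=\operatorname{Cay}^{+}(G,S)$,'' so your added care is a welcome tightening rather than a deviation.
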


\begin{proof}
Let $S$ be the set of all prime order elements of $G$. Then it is easy to observe that $\Gamma(G)=\operatorname{Cay}^{+}(G,S)$. Now, if $x\in S$, then $\circ(x)=p$, for some prime $p$. Then it follows that $x,x^2,\ldots,x^{p-1}\in S$. In other words, $[x]\subseteq S$, which implies that $S$ is a union of some atoms of $G$. Since distinct atoms are disjoint, by Lemma \ref{chi(S)} we get $\chi(S)\in\mathbb{Z}$. Hence the result is a direct consequence of Theorem \ref{abel-Cay-sum-spec-theorem}.
\end{proof} 

In the subsequent results, we obtain the complete Laplacian spectrum of cyclic $p$-group for odd prime $p$ and describe the eigenvalues of $\mathbb{Z}_n$ for odd $n$ in general. We first recall the notion of Ramanujan sum. For integers $s\ge 0$ and $n\ge 1$, the \textit{Ramanujan sum} is defined by
$$c(s,n)=\sum_{(k,n)=1}\zeta_n^{sk},$$
where $\zeta_n=e^{\frac{2\pi i}{n}}$ denotes primitive $n$-th root of unity and $(k,n)$ denotes the gcd of $k$ and $n$. The value of $c(s,n)$ is known to be an integer (see \cite{ramanujan-sum} for reference).
$$c(s,n)=\frac{\varphi(n)}{\varphi\left(\frac{n}{(s,n)}\right)}\mu\left(\frac{n}{(k,n)}\right),$$
where $\varphi(\cdot)$ is the Euler's totient function and $\mu(\cdot)$ is the M{\"o}bius function. It is easy to see that $c(s,n)=\mu(n)$, if $(s,n)=1$ and $c(s,n)=\varphi(n)$, if $(s,n)=n$.

\begin{proposition}
Let $p$ be an odd prime and $r\in\mathbb{N}$. Then the Laplacian spectrum of $\Gamma(\mathbb{Z}_{p^r})$ is given by the following multiset
$$\mathcal{L}\mbox{-}spec(\Gamma(\mathbb{Z}_{p^r}))=\begin{pmatrix}
    0 & 2(p-1) & p & p-2\\
    \frac{1}{2}(p^{r-1}+1) & \frac{1}{2}(p^{r-1}-1) & \frac{1}{2}(p^r-p^{r-1}) & \frac{1}{2}(p^r-p^{r-1})
\end{pmatrix}.$$
\end{proposition}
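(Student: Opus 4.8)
The plan is to apply Nica's formula (Theorem \ref{abel-Cay-sum-spec-theorem}) to the realization $\Gamma(\mathbb{Z}_{p^r})=\operatorname{Cay}^{+}(\mathbb{Z}_{p^r},S)$ established in the proof of Theorem \ref{abelian-laplacian-integral}, where $S$ is the set of prime-order elements. First I would pin down $S$ explicitly: since $p$ is the only prime dividing $|\mathbb{Z}_{p^r}|$, the set $S$ consists precisely of the elements of order $p$, which are the nonzero elements of the unique subgroup of order $p$, namely $S=\{kp^{r-1}:1\le k\le p-1\}$. In particular $|S|=p-1$.

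Next I would compute $\chi(S)$ for every irreducible character. Writing the characters of $\mathbb{Z}_{p^r}$ as $\chi_j(x)=\zeta_{p^r}^{jx}$ for $0\le j\le p^r-1$, direct substitution gives $\chi_j(S)=\sum_{k=1}^{p-1}\zeta_{p^r}^{jkp^{r-1}}=\sum_{k=1}^{p-1}\zeta_p^{jk}=c(j,p)$, the Ramanujan sum. Using the evaluation of $c(j,p)$ recalled above, this is $p-1$ when $p\mid j$ and $-1$ when $p\nmid j$; in either case it is a real integer, consistent with Theorem \ref{abelian-laplacian-integral}.

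The decisive structural point, and where the hypothesis that $p$ is odd is used, is the real/non-real dichotomy: $\chi_j$ is real if and only if $\chi_j=\overline{\chi_j}=\chi_{-j}$, i.e. if and only if $p^r\mid 2j$, which for odd $p$ forces $j=0$. Hence the trivial character is the only real character, and the remaining $p^r-1$ characters split into $\tfrac{1}{2}(p^r-1)$ conjugate pairs $\{\chi_j,\chi_{-j}\}$. Since divisibility by $p$ is invariant under $j\mapsto -j$, each pair lies entirely in one of the two cases above; counting then shows there are $\tfrac{1}{2}(p^{r-1}-1)$ pairs with $p\mid j$ and $\tfrac{1}{2}(p^r-p^{r-1})$ pairs with $p\nmid j$.

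Finally I would feed this into Theorem \ref{abel-Cay-sum-spec-theorem}. The trivial (real) character contributes $|S|-\chi(S)=0$ once. Each pair with $p\mid j$ contributes $|S|\pm|\chi(S)|=(p-1)\pm(p-1)$, i.e. $2(p-1)$ and $0$; each pair with $p\nmid j$ contributes $(p-1)\pm 1$, i.e. $p$ and $p-2$. Collecting multiplicities yields $0$ with multiplicity $1+\tfrac{1}{2}(p^{r-1}-1)=\tfrac{1}{2}(p^{r-1}+1)$, the value $2(p-1)$ with multiplicity $\tfrac{1}{2}(p^{r-1}-1)$, and $p$ and $p-2$ each with multiplicity $\tfrac{1}{2}(p^r-p^{r-1})$, which is exactly the claimed multiset (and one checks these four eigenvalues are distinct for odd $p$). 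The only genuine subtlety is the bookkeeping of the conjugate pairs, namely verifying that the two cases are closed under conjugation and that the resulting multiplicities sum correctly; the eigenvalue computation itself is immediate once the Ramanujan-sum values $c(j,p)$ are in hand.
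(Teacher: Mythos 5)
Your proof is correct and follows essentially the same route as the paper's: identify $S=\{kp^{r-1}:1\le k\le p-1\}$, compute $\chi_j(S)=c(j,p)$ via the Ramanujan sum, note that only the trivial character is real for odd $p$, and apply Theorem \ref{abel-Cay-sum-spec-theorem} with the same counting of conjugate pairs. Your explicit justification that $\chi_j=\overline{\chi_j}$ forces $j=0$ when $p$ is odd is a small point the paper only asserts, but the argument is otherwise identical.
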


\begin{proof}
Let $S=\lbrace{x\in \mathbb{Z}_{p^r}: \circ(x)=p}\rbrace$ be the set of all prime order elements in $\mathbb{Z}_{p^r}$. Then $|S|=p-1$. For $x\in S$, $x$ is of the form $x=tp^{r-1}$, where $t=1,\ldots,p-1$. The irreducible characters of $\mathbb{Z}_{p^r}$ are given by $\lbrace{\chi_k:k=0,\ldots,p^r-1}\rbrace$, where $\chi_k(a)=\zeta_{p^r}^{ak}$. Then for $0\le k\le p^r-1$,


$$\chi_k(S)=\sum_{x\in S}\chi_k(x)=\sum_{x\in S}\zeta_{p^r}^{kx}=\sum_{t=1}^{p-1}\zeta_{p^r}^{k(tp^{r-1})}=\sum_{t=1}^{p-1}\zeta_{p}^{kt}=c(k,p).$$

If $(k,p)=p$, then $c(k,p)=p-1$ and if $(k,p)=1$, then $c(k,p)=-1$. Since the only real character in $\mathbb{Z}_{p^r}$ is the trivial character $\chi_0$, it follows from Theorem \ref{abel-Cay-sum-spec-theorem} that $|S|-\chi_0(S)=(p-1)-(p-1)=0$, which is the eigenvalue of $\Gamma(\mathbb{Z}_{p^r})$ of multiplicity $1$ corresponding to the real character $\chi_0$.

Now $|\lbrace{k\in\mathbb{Z}_{p^r}:(k,p)=p}\rbrace|=p^{r-1}$. Then there are $(p^{r-1}-1)$ non-real characters $\chi_k$ of $\mathbb{Z}_{p^r}$ with $(k,p)=p$ and thus $\frac{1}{2}(p^{r-1}-1)$ conjugate pairs of non-real characters for the same. Again from Theorem \ref{abel-Cay-sum-spec-theorem}, for non-real characters $\chi_k$ with $(k,p)=p$,  
$|S|+\chi_k(S)=(p-1)+(p-1)=2(p-1)$ is an eigenvalue of $\Gamma(\mathbb{Z}_{p^r})$ with multiplicity $\frac{1}{2}(p^{r-1}-1)$ and $|S|-\chi_k(S)=(p-1)-(p-1)=0$ is an eigenvalue of $\Gamma(\mathbb{Z}_{p^r})$ with multiplicity $\frac{1}{2}(p^{r-1}-1)$.

Also, $|\lbrace{k\in\mathbb{Z}_{p^r}:(k,p)=1}\rbrace|=\varphi(p^r)$. Then there are $\frac{1}{2}(p^{r-1}-1)$ conjugate pairs of non-real characters $\chi_k$ with $(k,p)=1$. Again from Theorem \ref{abel-Cay-sum-spec-theorem}, for non-real characters $\chi_k$ with $(k,p)=1$,  
$|S|+\chi_k(S)=(p-1)+(-1)=p-2$ is an eigenvalue of $\Gamma(\mathbb{Z}_{p^r})$ with multiplicity $\frac{\varphi(p^r)}{2}$ and $|S|-\chi_k(S)=(p-1)-(-1)=p$ is an eigenvalue of $\Gamma(\mathbb{Z}_{p^r})$ with multiplicity $\frac{\varphi(p^r)}{2}$. This completes the proof.
\end{proof}
The next result describes the Laplacian eigenvalues of $\Gamma(\mathbb{Z}_n)$ for odd integer $n$. Before the statement, we introduce some notations for the sake of convenience. Let $k\in\mathbb{N}$, we denote by $[k]$ the set $\lbrace{1,2,\ldots,k}\rbrace$. For $1\le l\le k$, let $[k]\choose l$ denote the set of all $l$-subsets of $[k]$. The elements of $[k]\choose l$ are denoted as $A_{[k],l}^{(j)}$, for $j=1,2,\ldots,{k\choose l}$. Clearly, $A_{[k],k}=[k]$. Set $B_{[k],l}^{(j)}:=[k]\setminus A_{[k],l}^{(j)}$, for $j=1,2,\ldots,{k\choose l}$. Let $n$ be a positive integer with its canonical representation, that is $n=p_1^{r_1}p_2^{r_2}\cdots p_k^{r_k}$, where $p_i$'s are distinct primes and $r_i$'s are positive integers. For $T\subseteq [k]$, we define $p(T):=\sum_{i\in T}p_i$.


\begin{theorem}\label{Lap-spec-Zn-odd}
Let $n$ be an odd positive integer such that $n=p_1^{r_1}p_2^{r_2}\cdots p_k^{r_k}$ with $p_1<p_2<\cdots<p_k$ and $r_i\in\mathbb{N}$. Then the Laplacian eigenvalues of $\Gamma(\mathbb{Z}_n)$ are given by 
$$\left\lbrace{0, 2\left(p\left(A_{[k],k}\right)-k\right), p\left(A_{[k],k}\right)-p\left(A_{[k],l}^{(j)}\right), p\left(A_{[k],k}\right)+p\left(A_{[k],l}^{(j)}\right)-2k, p\left(A_{[k],k}\right)-2k, p\left(A_{[k],k}\right)}\right\rbrace,$$
where $l=1,2,\ldots,k-1$ and $j=1,2,\ldots,{k\choose l}$.
\end{theorem}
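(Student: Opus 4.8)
The plan is to realize $\Gamma(\mathbb{Z}_n)$ as the Cayley sum graph $\operatorname{Cay}^{+}(\mathbb{Z}_n,S)$ with $S$ the set of prime-order elements, and then extract the Laplacian spectrum from Theorem \ref{abel-Cay-sum-spec-theorem} by evaluating the character sums $\chi_m(S)$. I would begin by describing $S$ concretely: since $\mathbb{Z}_n$ is cyclic, for each prime $p_i\mid n$ there is a unique subgroup of order $p_i$, generated by $n/p_i$, and its $p_i-1$ nonidentity elements $S_i=\{t\,(n/p_i):1\le t\le p_i-1\}$ are precisely the elements of order $p_i$. Because element orders divide $n$, every prime-order element lies in exactly one $S_i$, so $S=\bigsqcup_{i=1}^k S_i$ and $|S|=\sum_{i=1}^k(p_i-1)=p(A_{[k],k})-k$. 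Each $S_i$ is a single atom, so $\chi(S)\in\mathbb{Z}$ by Lemma \ref{chi(S)}, which foreshadows integrality.

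The computational core is to evaluate $\chi_m(S)$ for each irreducible character $\chi_m(a)=\zeta_n^{ma}$, $0\le m\le n-1$. Splitting over the $S_i$ and simplifying the geometric sum,
$$\chi_m(S_i)=\sum_{t=1}^{p_i-1}\zeta_n^{mt(n/p_i)}=\sum_{t=1}^{p_i-1}\zeta_{p_i}^{mt}=c(m,p_i),$$
I recognize each inner sum as a Ramanujan sum, which equals $p_i-1$ when $p_i\mid m$ and $-1$ otherwise. Setting $T=T(m):=\{i\in[k]:p_i\mid m\}$ and summing over $i$ gives
$$\chi_m(S)=\sum_{i\in T}(p_i-1)-(k-|T|)=p(T)-k,$$
an integer depending on $m$ only through the divisor pattern $T$. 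I expect this reduction — collapsing the length-$(p_i-1)$ sum of $p_i$-th roots of unity to a Ramanujan sum and then bookkeeping the contributions across the $k$ primes — to be the main technical step.

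Next I would identify the real characters. For odd $n$, $\chi_m$ is real iff $\zeta_n^m=\pm1$, and since $\gcd(2,n)=1$ this forces $m=0$; hence the trivial character $\chi_0$ is the only real one and all other characters split into conjugate pairs $\{\chi_m,\chi_{n-m}\}$. Applying Theorem \ref{abel-Cay-sum-spec-theorem}, the real character contributes $|S|-\chi_0(S)=0$ (here $T=[k]$, so $\chi_0(S)=|S|$), while each non-real pair contributes the two values $|S|\pm|\chi_m(S)|=(p(A_{[k],k})-k)\pm|p(T)-k|$.

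Finally I would sort the output by the subset $T$. For $T=[k]$ (forced when $p_1\cdots p_k\mid m$) the values are $0$ and $2(p(A_{[k],k})-k)$; for $T=\emptyset$ (when $\gcd(m,n)=1$, so $\chi_m(S)=-k$) they are $p(A_{[k],k})$ and $p(A_{[k],k})-2k$; and for each proper nonempty $T=A_{[k],l}^{(j)}$ with $1\le l\le k-1$ they are $p(A_{[k],k})-p(A_{[k],l}^{(j)})$ and $p(A_{[k],k})+p(A_{[k],l}^{(j)})-2k$. Because these arise through the absolute value $|p(T)-k|$, the sign of $p(T)-k$ only exchanges which member of a pair carries the $+$ and which the $-$, so it does not affect the set of values; collecting all cases yields exactly the six listed families. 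A final remark on multiplicities: the number of $m$ realizing a given $T$ is $\prod_{i\in T}p_i^{r_i-1}\prod_{i\notin T}\varphi(p_i^{r_i})$, which is positive for every proper $T$ but equals $1$ for $T=[k]$ when $n$ is squarefree, so in that case the value $2(p(A_{[k],k})-k)$ occurs with multiplicity zero and should be read as absent.
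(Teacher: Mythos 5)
Your proposal is correct and follows essentially the same route as the paper: realizing $\Gamma(\mathbb{Z}_n)$ as $\operatorname{Cay}^{+}(\mathbb{Z}_n,S)$, reducing $\chi(S)$ to a sum of Ramanujan sums $c(\cdot,p_i)$, observing that the trivial character is the only real one for odd $n$, and organizing the eigenvalues by the divisor pattern $T$ (the paper's ``gcd tuples''). The only differences are cosmetic --- you index characters directly in the cyclic group rather than via the CRT decomposition, and you are slightly more careful about the absolute value in $|S|\pm|\chi(S)|$ and about the vanishing multiplicity of $2\left(p\left(A_{[k],k}\right)-k\right)$ in the square-free case, which the paper relegates to a separate remark.
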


\begin{proof}
The set of all prime order elements of $\mathbb{Z}_n$ is given by 
$$S=\lbrace{(x_1,0,\ldots,0),(0,x_2,0,\ldots,0),\ldots,(0,\ldots,0,x_k):x_i\in\mathbb{Z}_{p_i^{r_i}}, \circ(x_i)=p_i,i=1,2,\ldots,k}\rbrace.$$
Clearly, $|S|=\sum_{i=1}^kp_i-k=p(A_{[k],k})-k$. Since $\operatorname{Irr}(\mathbb{Z}_n)=\lbrace{\chi_{(t_1,t_2,\ldots,t_k)}:0\le t_i\le p_i^{r_i}-1, i=1,2,\ldots,k}\rbrace$, where $\chi_{(t_1,t_2,\ldots,t_k)}=\chi_{t_1}\times\chi_{t_2}\times\cdots\times\chi_{t_k}$, and $\chi_{t_i}(0)=1$, for $i=1,2,\ldots,k$,
we have,

$$\chi_{(t_1,t_2,\ldots,t_k)}(S)=\sum_{\lbrace{x_1:\circ(x_1)=p_1}\rbrace}\chi_{t_1}(x_1)+\sum_{\lbrace{x_2:\circ(x_2)=p_2}\rbrace}\chi_{t_2}(x_2)+\cdots+\sum_{\lbrace{x_k:\circ(x_k)=p_k}\rbrace}\chi_{t_k}(x_k)$$
$$~~~~~~~=\sum_{\lbrace{x_1:\circ(x_1)=p_1}\rbrace}\zeta_{p_1^{r_1}}^{t_1x_1}+\sum_{\lbrace{x_2:\circ(x_2)=p_2}\rbrace}\zeta_{p_2^{r_2}}^{t_2x_2}+\cdots+\sum_{\lbrace{x_k:\circ(x_k)=p_k}\rbrace}\zeta_{p_k^{r_k}}^{t_kx_k}$$
$$=\sum_{l_1=1}^{p_1-1}\zeta_{p_1^{r_1}}^{t_1(l_1p_1^{r_1-1})}+\sum_{l_2=1}^{p_2-1}\zeta_{p_2^{r_2}}^{t_2(l_2p_2^{r_2-1})}+\cdots+\sum_{l_k=1}^{p_k-1}\zeta_{p_k^{r_k}}^{t_k(l_kp_k^{r_k-1})}$$
$$=\sum_{l_1=1}^{p_1-1}\zeta_{p_1}^{t_1l_1}+\sum_{l_2=1}^{p_2-1}\zeta_{p_2}^{t_2l_2}+\cdots+\sum_{l_k=1}^{p_k-1}\zeta_{p_k}^{t_kl_k}~~~~~~~~~~~~~~~~~~~~~~$$
$$=c(t_1,p_1)+c(t_2,p_2)+\cdots+c(t_k,p_k)~~~~~~~~~~~~~~~~~~~~~~~~$$

If $(t_i,p_i)=p_i$, then $c(t_i,p_i)=p_i-1$ and if $(t_i,p_i)=1$, then $c(t_i,p_i)=-1$. We now introduce a new notion called gcd tuple. Define a relation $\rho$ on $\mathbb{Z}_n\cong\mathbb{Z}_{p_1}^{r_1}\times\mathbb{Z}_{p_2}^{r_2}\times\cdots\times\mathbb{Z}_{p_k}^{r_k}$ by $(t_1,t_2,\ldots,t_k)\rho(s_1,s_2,\ldots,s_k)$ if and only if $gcd(t_i,p_i)=gcd(s_i,p_i)$ for all $i=1,2,\ldots,k$. Note that the gcd's are either $1$ or $p_i$. Clearly, $\rho$ is an equivalence relation on $\mathbb{Z}_n$. We call these equivalence classes \textit{gcd tuples} and they can be represented as $$[(\varepsilon_1,\varepsilon_2,\ldots,\varepsilon_k)]=\lbrace{(t_1,t_2,\ldots,t_k)\in \mathbb{Z}_{p_1}^{r_1}\times\mathbb{Z}_{p_2}^{r_2}\times\cdots\times\mathbb{Z}_{p_k}^{r_k}:gcd(t_i,p_i)=\varepsilon_i,i=1,2,\ldots,k}\rbrace,$$ where $\varepsilon_i=1$ or $p_i$. We associate each tuple $(t_1,t_2,\ldots,t_k)$ from the gcd tuple $[(\varepsilon_1,\varepsilon_2,\ldots,\varepsilon_k)]$ to the character $\chi_{(t_1,t_2,\ldots,t_k)}$ of $\mathbb{Z}_n$. Now we apply Theorem \ref{abel-Cay-sum-spec-theorem}:

\textit{1. Eigenvalues from real character:}

The only real character of $\mathbb{Z}_n$, where $n$ is odd is the trivial character $\chi_{(0,\ldots,0)}$ which hails from the gcd tuple $[(p_1,p_2,\ldots,p_k)]$. Hence,
$$|S|-\chi_{(0,\ldots,0)}(S)=\left(\sum_{i=1}^kp_i-k\right)-\left(\sum_{i=1}^kp_i-k\right)=0.$$

\textit{2. Eigenvalues from non-real characters:}\\

\textit{Case $1$:} Gcd tuple $[(p_1,p_2,\ldots,p_k)].$

\begin{enumerate}
\item [(i).] $$|S|-\chi_{(t_1,t_2,\ldots,t_k)}(S)=\left(\sum_{i=1}^kp_i-k\right)-\left(\sum_{i=1}^kp_i-k\right)=0.$$

\item [(ii).] $$|S|+\chi_{(t_1,t_2,\ldots,t_k)}(S)=\left(\sum_{i=1}^kp_i-k\right)+\left(\sum_{i=1}^kp_i-k\right)=2\left(\sum_{i=1}^kp_i-k\right)=2\left(p\left(A_{[k],k}\right)-k\right).$$
\end{enumerate}

\textit{Case $2$:} Gcd tuple $[(1,\ldots,1,p_{i_1},1,\ldots,1,p_{i_2},1,\ldots,1,p_{i_k},1,\ldots,1)]$, where $1\le l<k.$\\

Let $A_{[k],l}^{(j)}\in{[k]\choose l}$ be an arbitrary $l$-subset of $[k]$, for $j=1,2,\ldots,{k\choose l}$. For this subset, we have
\begin{enumerate}
\item [(i).] $$|S|-\chi_{(t_1,t_2,\ldots,t_k)}(S)=\left(p\left(A_{[k],k}\right)-k\right)-\left(p\left(A_{[k],l}^{(j)}\right)-l-(k-l)\right)=p\left(A_{[k],k}\right)-p\left(A_{[k],l}^{(j)}\right).$$

\item [(ii).] $$|S|+\chi_{(t_1,t_2,\ldots,t_k)}(S)=\left(p\left(A_{[k],k}\right)-k\right)+\left(p\left(A_{[k],l}^{(j)}\right)-l-(k-l)\right)=p\left(A_{[k],k}\right)+p\left(A_{[k],l}^{(j)}\right)-2k.$$

\end{enumerate}

\textit{Case $3$:} Gcd tuple $[(1,\ldots,1)].$\\

\begin{enumerate}
\item [(i).] $$|S|-\chi_{(t_1,t_2,\ldots,t_k)}(S)=\left(\sum_{i=1}^kp_i-k\right)-(-k)=p\left(A_{[k],k}\right).$$

\item [(ii).] $$|S|+\chi_{(t_1,t_2,\ldots,t_k)}(S)=\left(\sum_{i=1}^kp_i-k\right)+(-k)=p\left(A_{[k],k}\right)-2k.$$
\end{enumerate}
This completes the proof.
\end{proof}

\begin{example}
We illustrate Theorem \ref{Lap-spec-Zn-odd} by taking the group $\mathbb{Z}_n$, where $n=315=3^2\cdot 5\cdot 7$, i.e., $p_1=3,p_2=5,p_3=7$. Then 
$${[3]\choose 1}=\lbrace{\lbrace{1}\rbrace,\lbrace{2}\rbrace,\lbrace{3}\rbrace}\rbrace, {[3]\choose 2}=\lbrace{\lbrace{1,2}\rbrace,\lbrace{2,3}\rbrace,\lbrace{1,3}\rbrace}\rbrace.$$
Thus we have, 
$$p\left(A_{[3],3}\right)=p_1+p_2+p_3=15,$$
$$p\left(A_{[3],1}^{(1)}\right)=p_1=3,p\left(A_{[3],1}^{(2)}\right)=p_2=5,p\left(A_{[3],1}^{(3)}\right)=p_3=7,$$
$$p\left(A_{[3],2}^{(1)}\right)=p_1+p_2=8,p\left(A_{[3],2}^{(2)}\right)=p_2+p_3=12,p\left(A_{[3],2}^{(3)}\right)=p_1+p_3=10.$$
Hence, the eigenvalues of $\mathbb{Z}_{315}$ are $\lbrace{0,3,5,7,8,9,10,12,14,15,16,17,19,21,24}\rbrace$.
\end{example}

\begin{remark}
The Laplacian eigenvalue $2\left(p\left(A_{[k],k}\right)-k\right)$ of $\Gamma(\mathbb{Z}_n)$ described in Theorem \ref{Lap-spec-Zn-odd} does not occur in the spectrum if $n$ is square-free. For example, if we take $n=105=3\cdot 5\cdot 7$, then the eigenvalue $2\left(p\left(A_{[3],3}\right)-3\right)=24$ does not occur in the spectrum of $\Gamma(\mathbb{Z}_{105})$. In this case, the set of eigenvalues of $\Gamma(\mathbb{Z}_{105})$ becomes $\lbrace{0,3,5,7,8,9,10,12,14,15,16,17,19,21}\rbrace$.
\end{remark}

\section{Planarity of $\Gamma(G)$}\label{planar-section}
In this section, we characterize the groups $G$ for which $\Gamma(G)$ is planar. The key result which is used for this is the characterization of planar graphs in terms of forbidden graphs, now known as Kuratowski's theorem: {\it A finite graph is planar if and only if it does not contain a subgraph that is a subdivision of the complete graph $K_5$ or the complete bipartite graph $K_{3,3}$.} 
\begin{theorem}
Let $G$ be a finite group such that $\Gamma(G)$ is planar. Then $G$ is one of the following:
\begin{itemize}
    \item $G\cong \mathbb{Z}_5$.
    \item $G$ is a cyclic $3$-group.
    \item $G$ is a $2$-group with with exactly one or three elements of order $2$.
    \item $|G|=2^m3^n$ with a unique subgroup each of order $2$ and $3$.
    
\end{itemize}
\end{theorem}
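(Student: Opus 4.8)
The plan is to apply Kuratowski's theorem in both directions: for the forbidden groups I will exhibit a subdivision of $K_5$ or $K_{3,3}$, and for the four listed families I will decompose $\Gamma(G)$ into connected components and verify each component is planar. Two elementary observations drive the argument. First, the identity is adjacent to exactly the set $S$ of prime-order elements, so $\deg(e)=|S|$. Second, if $\langle a\rangle\cong\mathbb{Z}_p$ is a cyclic subgroup of prime order, then the subgraph of $\Gamma(G)$ induced on $\langle a\rangle$ is exactly $\Gamma(\mathbb{Z}_p)$, since for $a^i,a^j$ the order of $a^{i+j}$ is $p$ when $i+j\not\equiv 0$ and $1$ otherwise. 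A direct check shows $\Gamma(\mathbb{Z}_5)$ is the wheel $W_4$ (planar), whereas for $p\ge 7$ the vertex $e$ is joined to all $p-1$ elements of order $p$, which themselves induce the complete multipartite graph with $\tfrac{p-1}{2}$ parts of size $2$; an edge count (the number of edges exceeds $3|V|-6$) shows $\Gamma(\mathbb{Z}_p)$ is non-planar.

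For necessity I first restrict the primes dividing $|G|$. If some prime $p\ge 7$ divides $|G|$, then by Cauchy's theorem $G$ has a $\mathbb{Z}_p$ subgroup whose induced subgraph $\Gamma(\mathbb{Z}_p)$ is already non-planar. Hence only the primes $2,3,5$ can occur. If $5\mid |G|$, I argue $G\cong\mathbb{Z}_5$: whenever there are at least $8$ elements of order $5$ (as for $\mathbb{Z}_5^2$) the apex $e$ together with one vertex from each of four parts of the resulting cocktail-party graph yields a $K_5$, while the cases $\mathbb{Z}_{25}$ and $\mathbb{Z}_{10}$ are ruled out by routing a forbidden subgraph through the order-$25$ (resp. order-$10$) elements. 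This leaves $|G|=2^m3^n$. For an abelian $2$-group the set $\{e\}\cup T$ of the identity and the involutions $T$ is the complete graph $K_{|T|+1}$, so $|T|\ge 7$ forces $K_8\supset K_5$; thus at most three involutions are allowed. Likewise a non-cyclic Sylow $3$-subgroup has at least $8$ elements of order $3$, which with $e$ give a cocktail-party graph plus apex, hence $K_5$, forcing the Sylow $3$-subgroup to be cyclic (a unique subgroup of order $3$). Finally, in the genuinely mixed case $m,n\ge 1$, I rule out three involutions: if $G$ contained both a Klein subgroup $\mathbb{Z}_2^2$ and an element of order $3$, the order-$6$ elements supply the connecting paths completing a $K_{3,3}$-subdivision between the involutions and identity on one side and the order-$3$ and order-$6$ elements on the other, so exactly one involution (a unique subgroup of order $2$) is permitted.

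For sufficiency I exhibit planarity componentwise. The group $\mathbb{Z}_5$ gives $W_4$. For a cyclic $3$-group, $S$ is just the two elements of the unique order-$3$ subgroup, so every vertex has at most $|S|=2$ neighbours and $\Gamma(G)$ is a disjoint union of paths and cycles. For a $2$-group with a single involution, the neighbours of $x$ lie in $x^{-1}T$ with $|T|=1$, so $\Gamma(G)$ is a matching together with isolated vertices. For an abelian $2$-group with exactly three involutions I invoke the component description from the proof of Theorem \ref{abelian-2-group-integral}: the component of $e$ is $K_{|T|+1}=K_4$, each order-$4$ component is the complete multipartite graph, which here reduces to $C_4$, and each order-$2^k$ component with $k\ge 3$ is the crown graph $\overline{K_n\square P_2}$, which in this regular setting is the cube $Q_3$; all of these are planar. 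The mixed family is handled analogously, showing every component has maximum degree at most $3$ and coincides with one of these planar types.

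I expect the main obstacle to be the necessity argument in the mixed case and the borderline prime $5$, because there $\Gamma(G)$ is neither a clique nor a clean union of Cayley-sum pieces: the required $K_5$ or $K_{3,3}$ is not confined to the prime-order elements and the identity but must be routed through the order-$6$ (resp. order-$5k$) vertices, so exhibiting an explicit subdivision demands careful bookkeeping of these mixed-order elements. A secondary difficulty is the non-abelian $2$-groups, where $\{e\}\cup T$ need not be complete, so the clean clique/component dictionary must be replaced by a direct analysis of the degrees and local structure of $\Gamma(G)$.
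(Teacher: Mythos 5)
Your overall strategy (bound the primes dividing $|G|$, then analyse each prime-power and mixed case via forbidden subgraphs) is the same as the paper's, and parts of it are sound: the edge count showing $\Gamma(\mathbb{Z}_p)$ is non-planar for $p\ge 7$, and the $\mathbb{Z}_3\times\mathbb{Z}_3$ obstruction for non-cyclic $3$-groups (provided you first invoke the fact that a non-cyclic $3$-group \emph{contains} a $\mathbb{Z}_3\times\mathbb{Z}_3$; your claim that the order-$3$ elements of an arbitrary non-cyclic Sylow $3$-subgroup induce a cocktail-party graph is not justified, since two non-commuting order-$3$ elements can have a product of order $9$). However, the necessity argument has genuine gaps. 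The most serious is in the mixed case $|G|=2^m3^n$: you never prove that $G$ has a \emph{unique subgroup of order $3$}, only that each Sylow $3$-subgroup is cyclic, which is strictly weaker. For instance $SL(2,3)$ has a unique involution and cyclic Sylow $3$-subgroups but four subgroups of order $3$; the theorem excludes it, your argument does not. The paper closes this by showing elements of order $2$ and $3$ must commute (else $S_3\le G$, and $\Gamma(S_3)$ contains $K_5$) and then exhibiting a non-planar induced subgraph on the $18$-element set $HKL$ coming from two distinct order-$3$ subgroups. Relatedly, your exclusion of a second involution via ``order-$6$ elements supplying connecting paths'' fails when the involutions and the order-$3$ element do not commute: $A_4$ has three involutions and no elements of order $6$ at all, yet must be excluded (the paper does this by showing $\Gamma(K_4\rtimes\mathbb{Z}_3)$ and $\Gamma(K_4\times\mathbb{Z}_3)$ are non-planar; note $\{e,t_1,t_2,t_3,c\}$ with $c$ a $3$-cycle is already a $K_5$ in $\Gamma(A_4)$).

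Two further gaps. For $2$-groups you treat only the abelian case and explicitly defer the non-abelian one, but that is precisely where the work lies: the missing idea is that if two involutions $x,y$ satisfy $\circ(xy)=2^k>2$ then $\langle x,y\rangle\cong D_{2^k}$ contains $D_4$ and $\Gamma(D_4)$ is non-planar, which forces all products of involutions to have order $2$ and reduces the problem to a clique count; also your threshold should be $|T|\ge 5$ (giving $K_6$), not $|T|\ge 7$, or you cannot conclude ``one or three.'' For $5\mid|G|$, the cocktail-party structure on ``at least $8$ elements of order $5$'' is only justified when those elements pairwise commute (as in $\mathbb{Z}_5\times\mathbb{Z}_5$); two distinct order-$5$ subgroups of a non-abelian group need not have mutually adjacent generators, and the exclusions of $\mathbb{Z}_{25}$, $\mathbb{Z}_{10}$, $\mathbb{Z}_{15}$ and $D_5$ are asserted rather than proved. (The sufficiency half of your write-up is not needed: the statement is a necessity claim only, and the paper proves sufficiency for $2$-groups in a separate theorem.)
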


\begin{proof}
    Let $G$ be a finite group such that  $\Gamma(G)$ is planar. We first show that $|G|$ has no prime factor $p\geq 7$. Suppose $p\geq 7$ divides $|G|$. Then $G$ has a subgroup $H=\langle a \rangle \cong \mathbb{Z}_p$. Then $H$ has at least $6$ distinct non-identity elements $X=\{a,a^{-1},b,b^{-1},c,c^{-1}\}$ of order $p$. Consider the subgraph induced by $X\cup\{e\}$ given in Figure \ref{fig:1} (left). The vertices $a^{-1},b^{-1},c^{-1}$ can be identified to get a minor isomorphic to $K_5$ and hence $\Gamma(H)$ and thereby $\Gamma(G)$ is non-planar.
    Thus, if $p$ is a prime dividing $|G|$, then $p\leq 5$. 
\begin{figure}[htb]
\centering
\begin{tikzpicture}[scale=0.5]
	
\draw[fill= black, draw = black] (0,0) circle (0.1);
\node[] at (0.2,-0.4) {\bf $e$};
\node[] at (0,1.6) {\bf $a$};
\node[] at (0,-4.6) { $a^{-1}$};
\node[] at (1.4,-0.8) {\bf $b$};
\node[] at (-1.4,-0.8) {\bf $c$};
\node[] at (-3.7,2.6) {\bf $b^{-1}$};
\node[] at (3.9,2.6) {\bf $c^{-1}$};

\foreach \a in {90,210,330} { 
\draw[fill] (\a:1.2cm) circle (0.1);
\draw[fill] (\a:1.2 cm) -- (0,0);
\draw[fill] (\a:1.2 cm) -- (\a + 120:1.2 cm);}

\foreach \a in {30,150,270} { 
\draw[fill] (\a:4.2cm) circle (0.1);
\draw[fill] (\a:4.2 cm) -- (0,0);
\draw[fill] (\a:4.2 cm) -- (\a + 120:4.2 cm);}

\draw[fill] (30:4.2 cm) -- (90:1.2 cm);
\draw[fill] (150:4.2 cm) -- (90:1.2 cm);
\draw[fill] (30:4.2 cm) -- (330:1.2 cm);
\draw[fill] (270:4.2 cm) -- (330:1.2 cm);
\draw[fill] (270:4.2 cm) -- (210:1.2 cm);
\draw[fill] (150:4.2 cm) -- (210:1.2 cm);

\node[] at (0,-7) { Subgraph induced by $X\cup\{e\}$};

\begin{scope}[xshift=14cm,yshift=-1cm]
\draw[fill= black, draw = black] (0,0) circle (0.1);
\foreach \a in {45,135,225,315} { 
\draw[fill] (\a:3cm) circle (0.1);
\draw[fill] (\a:3 cm) -- (0,0);
\draw[fill] (\a:3 cm) -- (\a + 90:3 cm);}

\foreach \a in {75,105,255,285} { 
\draw[fill] (\a:4cm) circle (0.1);
\draw[fill] (\a:4 cm) -- (0,0);
}
\draw[fill] (75:4 cm) -- (105:4 cm);
\draw[fill] (255:4 cm) -- (105:4 cm);
\draw[fill] (75:4 cm) -- (285:4 cm);
\draw[fill] (255:4 cm) -- (285:4 cm);

\draw[fill] (75:4 cm) -- (60:4 cm);
\draw[fill] (60:4 cm) -- (45:3 cm);
\draw[fill] (105:4 cm) -- (120:4 cm);
\draw[fill] (120:4 cm) -- (135:3 cm);
\draw[fill] (255:4 cm) -- (240:4 cm);
\draw[fill] (225:3 cm) -- (240:4 cm);
\draw[fill] (300:4 cm) -- (285:4 cm);
\draw[fill] (300:4 cm) -- (315:3 cm);

\foreach \a in {60,120,240,300} { 
\draw[fill=red,draw=red] (\a:4cm) circle (0.1);
}

\node[] at (5:0.5cm) { $e$};
\node[] at (45:3.5cm) { $h$};
\node[] at (135:3.5cm) { $h^3$};
\node[] at (225:3.5cm) { $h^4$};
\node[] at (315:3.5cm) { $h^2$};

\node[] at (75:4.5cm) { $k$};
\node[] at (105:4.5cm) { $k^3$};
\node[] at (255:4.5cm) { $k^4$};
\node[] at (285:4.5cm) { $k^2$};

\node[] at (305:4.7cm) { $h^3k^3$};
\node[] at (235:4.6cm) { $hk$};
\node[] at (125:4.7cm) { $h^2k^2$};
\node[] at (55:4.7cm) { $h^4k^4$};
\node[] at (4,-6) { Subgraph induced by $H\cup K \cup \{e\}\cup \{h^ik^i:1\leq i \leq 4\}$};
\node[] at (4.5,-0.5) { Minor};
\draw[fill,->] (3,0) -- (6,0);
\end{scope}

\begin{scope}[xshift=23cm,yshift=-1cm]
\draw[fill= black, draw = black] (0,0) circle (0.1);
\foreach \a in {45,135,225,315} { 
\draw[fill] (\a:3cm) circle (0.1);
\draw[fill] (\a:3 cm) -- (0,0);
\draw[fill] (\a:3 cm) -- (\a + 90:3 cm);}

\foreach \a in {75,105,255,285} { 
\draw[fill] (\a:4cm) circle (0.1);
\draw[fill] (\a:4 cm) -- (0,0);
}
\draw[fill] (75:4 cm) -- (105:4 cm);
\draw[fill] (255:4 cm) -- (105:4 cm);
\draw[fill] (75:4 cm) -- (285:4 cm);
\draw[fill] (255:4 cm) -- (285:4 cm);

\draw[fill] (75:4 cm) -- (45:3 cm);
\draw[fill] (105:4 cm) -- (135:3 cm);
\draw[fill] (255:4 cm) -- (225:3 cm);
\draw[fill] (315:3 cm) -- (285:4 cm);


\end{scope}

\end{tikzpicture}
\caption{Non-planar subgraphs of $\Gamma(G)$}
\label{fig:1}
\end{figure}
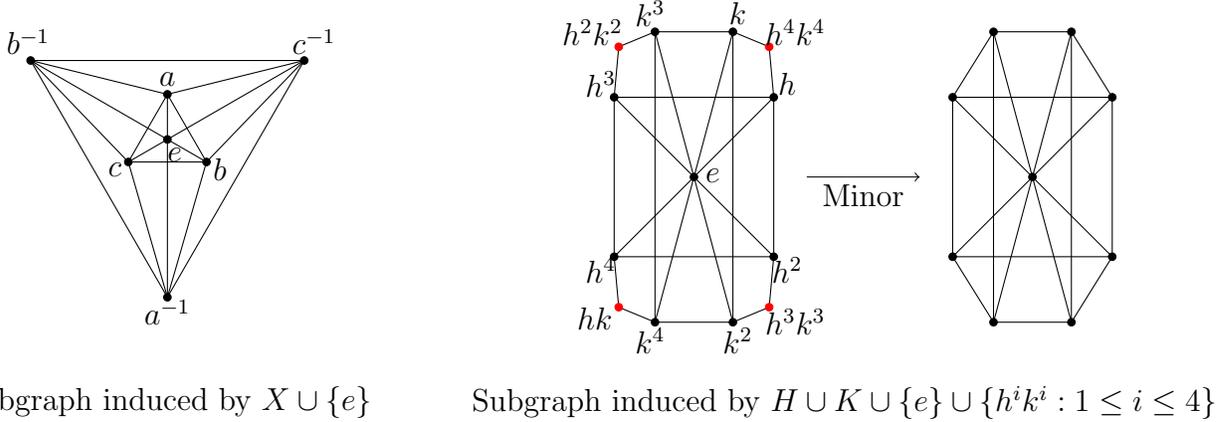

Suppose $5$ divides $|G|$. If $5^2$ divides $|G|$, then $G$ has a subgroup isomorphic to $\mathbb{Z}_{25}$ or $\mathbb{Z}_5\times \mathbb{Z}_5$. However, as $\Gamma(\mathbb{Z}_{25})$ and $\Gamma(\mathbb{Z}_5\times \mathbb{Z}_5)$ are not planar, $5^2$ does not divide $|G|$. Let $H=\langle h \rangle$ and $K=\langle k \rangle$ (if they exist) be two distinct subgroups of order $5$ in $G$. Then $|HK|=25$. Consider the subgraph of $\Gamma(G)$ obtained by some of the vertices in $HK$ (Figure \ref{fig:1} (right)). Merging the \textcolor{red}{red} vertices with one of their neighbors, we get the above minor of $\Gamma(G)$ which is not planar. Thus $G$ has a unique normal subgroup of order $5$. If $|G|$ has any other prime factor apart from $5$, it must be $2$ or $3$. That is, $G$ has a subgroup of order $10$ or $15$. However, as none of $\Gamma(D_5),\Gamma(\mathbb{Z}_{10}),\Gamma(\mathbb{Z}_{15})$ are non-planar, it follows that $|G|=5$, i.e., $G\cong \mathbb{Z}_5$.

If $5$ does not divide $|G|$, then $|G|=2^n$ or $3^n$ or $2^m3^n$. If $|G|=3^n$, we show that $G$ is cyclic. For this, we recall a result: {\it Let $p$ be an odd prime and $\mathcal{G}$ be a non-cyclic group of order $p^k$ $(k\geq 3)$, then $\mathcal{G}$ has a non-cyclic subgroup of order $p^{k-1}$.} If $G$ is non-cyclic, then by repeated use of the above result, we get a non-cyclic subgroup $\mathbb{Z}_3 \times \mathbb{Z}_3$ in $G$. Now, as $\Gamma(\mathbb{Z}_3 \times \mathbb{Z}_3)$ is not planar, we get a contradiction. In return we obtain that, $G$ is cyclic.

Next we deal with the case when $|G|=2^n$. Let $n_2$ denote the number of elements of order $2$ in $G$. Let $n_2\geq 3$ and $\circ(x)=\circ(y)=2$ with $\circ(xy)=m$. If $m>2$, then $\langle x,y\rangle\cong D_m\leq G$, i.e., $m=2^k$, where $k\geq 2$. Then $G$ has a subgroup isomorphic to $D_4$ and as $\Gamma(D_4)$ is not planar, we get a contradiction. Hence $\circ(xy)=2$. Now, if $n_2\geq 5$, we get at least $5$ elements of order $2$ such that the order of product of any two of them is also $2$. Hence, these five elements of order $2$ form a complete graph on $5$ vertices ($K_5$) in $\Gamma(G)$, a contradiction. So $n_2\leq 3$, i.e., $n_2=1$ or $3$. It is to be noted that $n_2=1$ implies that $G\cong \mathbb{Z}_{2^n}$ or $Q_{2^n}$. On the other hand, finite $2$-groups with exactly three involutions has been classified in \S 82 in \cite{berkovich}.

Finally, let $|G|=2^m3^n$. Suppose $G$ has more than one element of order $2$, then by arguing as above, if $\circ(x)=\circ(y)=2$, then $\circ(xy)=2$ and hence $G$ has a normal subgroup $\{e,x,y,xy\}\cong K_4$. Also, $G$ has a subgroup isomorphic $\mathbb{Z}_3$ of order $3$. Then $G$ contains a subgroup isomorphic to $K_4\rtimes \mathbb{Z}_3$ or $K_4\times \mathbb{Z}_3$. But, as $\Gamma(K_4\rtimes \mathbb{Z}_3)$ and $\Gamma(K_4\times \mathbb{Z}_3)$ are non-planar, $G$ must have a unique element of order $2$, say $x$. Let $H=\langle x \rangle$. As $G$ can not contain any subgroup isomorphic to $S_3$, any element of order $3$ in $G$ commutes with $x$. Next we claim that $G$ has a unique subgroup of order $3$. If not, let $K=\langle k\rangle$ and $L=\langle l\rangle$ be two subgroups of order $3$ in $G$. As elements of order $2$ and $3$ commutes in $G$, we have $HK\cong HL\cong \mathbb{Z}_6$ but $HK\neq HL$ and $HK\cap HL=H$. Thus, $|(HK)(HL)|=|HKL|=\frac{6\cdot 6}{2}=18$ and $$HKL=\{e,h,k,k^2,l,l^2,hk,hl,hk^2,hl^2,hkl,hk^2l,hkl^2,hk^2l^2,kl,k^2l,kl^2,k^2l^2\}.$$
It can be checked that subgraph induced by $HKL$ is not planar, hence $G$ is not planar, a contradiction. Thus, $G$ has unique subgroup of order $3$. 
\end{proof}
\begin{theorem}
     Let $|G|=2^n$. If $G$ has no subgroup isomorphic to $D_4$ and ${\mathbb{Z}^3_2}$, then $\Gamma(G)$ is planar.
\end{theorem}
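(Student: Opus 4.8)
The plan is to reduce the problem to the connected components of $\Gamma(G)$, using that a graph is planar iff each component is, and to exploit the fact that the hypotheses force the involutions to be highly constrained. \textbf{Step 1 (the involution subgroup).} Since $G$ has no subgroup isomorphic to $D_4$, any two involutions $x,y$ satisfy $\circ(xy)\le 2$: otherwise $\circ(xy)=2^j$ with $j\ge2$ and $\langle x,y\rangle\cong D_{2^j}$ would contain a copy of $D_4$. Hence $V:=\{g\in G:g^2=e\}$ is closed under multiplication, so it is an elementary abelian subgroup $V\cong\mathbb{Z}_2^{\,r}$, and it is normal because conjugation permutes the involutions. The assumption that $G$ has no $\mathbb{Z}_2^3$ forces $r\le 2$, so $n_2:=|V|-1\in\{1,3\}$. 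Moreover, an involution is never adjacent to an element of order $>2$: if $v\in V$ and $\circ(vx)=2$ then $x=v\cdot(vx)\in V$. Thus $A=V$ is itself a component and induces the clique $K_{2^r}$, which is $K_2$ or $K_4$, planar in either case.

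\textbf{Step 2 (local structure).} The decisive observation is that $x\sim y$ means $xy\in V\setminus\{e\}$, so the neighbours of any vertex $x$ are exactly $\{x^{-1}v:v\in V\setminus\{e\}\}\subseteq x^{-1}V$; in particular $\deg(x)\le n_2$. Passing to $\overline G=G/V$, an element of order $2^k\ (k\ge2)$ maps to an element of order $2^{k-1}$, because $x^{2^{k-1}}\in V$ while $x^{2^{k-2}}$ has order $4$ and so lies outside $V$. Consequently, along any edge $xy\in V$ we have $\bar y=\bar x^{-1}$, whence $\circ(x)=\circ(y)$, so every component other than $A$ is order-homogeneous. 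Computing degrees: an element of order $4$ has $x^2\in V\setminus\{e\}$, which kills one of its three candidate neighbours and leaves degree exactly $2$, whereas an element of order $\ge 8$ has $x^2\notin V$ and hence degree exactly $3$.

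\textbf{Step 3 ($n_2=1$).} Here $G$ is cyclic or generalized quaternion and $V=\{e,z\}$ with $z$ central. Every $x$ has the single candidate neighbour $x^{-1}z$, which equals $x$ precisely when $\circ(x)=4$. Thus order-$4$ elements are isolated, while $e\sim z$ and each element of order $\ge 8$ is matched to $x^{-1}z$; the whole graph is a disjoint union of edges and isolated vertices, hence planar.

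\textbf{Step 4 ($n_2=3$, the main case).} Write $V=\{e,a,b,c\}$. By Step 2 the order-$4$ components are $2$-regular, hence disjoint cycles, which are planar. For a component of elements of order $2^k\ge 8$ containing $x$, the neighbour calculation keeps everything inside $xV\cup x^{-1}V$, which are two disjoint cosets since $x^2\notin V$, giving at most eight vertices. Writing the cosets as $\{xs:s\in V\}$ and $\{x^{-1}t:t\in V\}$, a direct computation yields $xs\sim x^{-1}t\iff xsx^{-1}\ne t$, i.e. $\iff\phi(s)\ne t$, where $\phi\in\operatorname{Aut}(V)$ is conjugation by $x$. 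This is exactly $K_{4,4}$ with the perfect matching $\{(xs,x^{-1}\phi(s)):s\in V\}$ deleted, namely the cube graph $Q_3$, which is planar (and connected, so by Step 2 all eight vertices genuinely have order $2^k$). The step I expect to be the main obstacle is precisely this one, since $V$ need not be central and $\phi$ could a priori transpose two of $a,b,c$; the resolution is that changing $\phi$ only changes \emph{which} perfect matching is removed from $K_{4,4}$, so the isomorphism type is $Q_3$ regardless, and the central and non-central cases collapse to the same planar graph. Assembling Steps 1--4, every component of $\Gamma(G)$ is planar, and therefore $\Gamma(G)$ is planar.
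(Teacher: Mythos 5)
Your proof is correct, and its overall architecture matches the paper's: reduce to connected components, show the involutions together with the identity form a $K_2$ or $K_4$ component, and identify every other component as a cycle or a cube. Where you genuinely diverge is in how the two key structural facts are established, and in both places your route is cleaner. For order-homogeneity of the non-trivial components, the paper argues directly with products of involutions (first proving $xy=yx$ for adjacent $x,y$ via an identity among three distinct involutions); you instead pass to the quotient $G/V$ and use that an element of order $2^k$ ($k\ge 2$) maps to an element of order exactly $2^{k-1}$, so that $xy\in V$ forces $\bar y=\bar x^{-1}$ and hence $\circ(x)=\circ(y)$ --- shorter and less ad hoc. For the components of elements of order at least $8$, the paper fixes $x$, examines how $x$ conjugates $z_2,z_3$, and writes out only one of the two resulting cases with an explicit drawing of the cube; your coset computation $xs\sim x^{-1}t\iff \phi(s)\ne t$ identifies the component as $K_{4,4}$ minus a perfect matching in one stroke, and the remark that all perfect matchings of $K_{4,4}$ are equivalent under relabelling handles the central and non-central actions of $x$ on $V$ uniformly, thereby covering a case the paper leaves implicit. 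Finally, for order-$4$ elements you settle for ``$2$-regular, hence a disjoint union of cycles,'' which suffices for planarity, whereas the paper pins these components down as $4$-cycles; the loss of precision costs nothing here. Both arguments are sound; yours is somewhat more economical and uniform.
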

\begin{proof}
    We first show that $G$ has either $1$ or $3$ elements of order $2$. If not, let $\{x_1,x_2,\ldots,x_t\}$ be the set of elements of order $2$ in $G$ where $t\geq 5$. If for any distinct $i,j$, $\circ(x_ix_j)=2^l>2$, then $\langle x_i,x_j\rangle \cong D_{2^l}$ has a subgroup isomorphic to $D_4$. Hence,  $\circ(x_ix_j)=2$ for all $i\neq j$. Thus, elements of order $2$ along with identity forms an elementary abelian $2$-group of order $\geq 8$, i.e., $G$ has a subgroup isomorphic to ${\mathbb{Z}^3_2}$, a contradiction. Consequently, $G$ has $1$ or $3$ elements of order $2$.

    If $G$ has exactly one element of order $2$, then $G\cong \mathbb{Z}_{2^n}$ or $Q_{2^n}$, and clearly $\Gamma(G)$ is planar. So, we assume that $G$ has exactly three elements of order $2$, say $z_1,z_2,z_3$. It is to be noted that, in this case, identity along with three elements of order $2$ forms a component $C$ of $\Gamma(G)$. 
    
    {\it Claim 1:} If $x,y$ are two vertices of $G\setminus C$ such that $x\sim y$, then $\circ(x)=\circ(y)$. 

    {\it Proof of Claim 1:} If possible, let $\circ(x)=2^a$ and $\circ(y)=2^b$, where $a>b$. We first show that $xy=yx$. As $x\sim y$, we have $\circ(xy)=\circ(yx)=2$. Also, $\circ(x^{2^{a-1}})=2$. Then $xy,yx,x^{2^{a-1}}$ are elements of order $2$. Clearly, $x^{2^{a-1}}\neq xy$ or $yx$ as that would imply $\circ(x)=\circ(y)$. Moreover, if $xy\neq yx$, then $xy,yx,x^{2^{a-1}}$ are three distinct elements of order $2$ in $G$ and hence their product $(yx)(x^{2^{a-1}})(xy)=e$, i.e., $x^{2^{a-1}+2}=y^{-2}$. Squaring both sides, we get $x^4=y^{-4}$, i.e., $\circ(x)=\circ(y)$, a contradiction. Thus only possibility left is $xy=yx$.

    Therefore, we have $e=(xy)^{2^b}=x^{2^b}\cdot y^{2^b}=x^{2^b}\neq e$ as $b<a$, a contradiction. Eventually, the claim follows.

    From Claim 1, it is obvious that any two elements in the same component of $\Gamma(G)$, other than $C$, must be of the same order.

    {\it Claim 2:} Any component containing an element of order $4$ is a $4$-cycle.
    
    {\it Proof of Claim 2:} Let $x$ be an element of order $4$. Then $\circ(x^2)=2$, i.e., $x^2 \in \{z_1,z_2,z_3\}$ and $deg(x)=3-1=2$ in $\Gamma(G)$. Let $x^2=z_1$. Then we get a $4$-cycle $x\sim x^{-1}z_2\sim x^{-1}\sim x^{-1}z_3\sim x$. Now, as every element in this component is of order $4$, all of them have degree $2$ and hence the component itself is the above $4$-cycle. Thus the claim holds.

    {\it Claim 3:} Any component containing an element of order $2^t$ with $t\geq 3$ is isomorphic to a cube, i.e., $C_4\square P_2$.
    
    {\it Proof of Claim 3:} Let $x$ be an element of order $2^t$ with $t\geq 3$. Then degree of every vertex in the component of $x$ is $3$ and $x^{2^{t-1}}$ is an element of order $2$. Without loss of generality, let $z_1=x^{2^{t-1}}$. Then $xz_1=z_1x$. Let $H=\{e,z_1,z_2,z_3\}$, which implies that $H$ is a normal subgroup of $G$ and thus $xH=Hx$, i.e., $\{x,xz_1,xz_2,xz_3\}=\{x,z_1x,z_2x,z_3x\}$, i.e., $\{xz_2,xz_3\}=\{z_2x,z_3x\}$. 
    
    Suppose $xz_2=z_3x$ and $xz_3=z_2x$. Then the component takes the following form:
    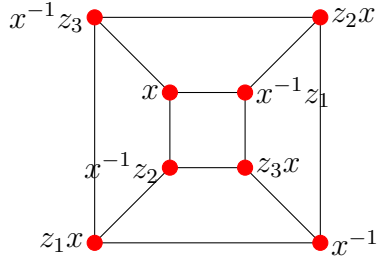
\begin{figure}[htb]
        \centering
        \begin{tikzpicture}
   \draw [] (0, 0) -- (0, -1);
   \draw [] (0, 0) -- (1, 0);
   \draw [] (1, 0) -- (1, -1);
   \draw [] (0, -1) -- (1, -1);
   \draw [] (0, 0) -- (-1, 1);
   \draw [] (1, 0) -- (2, 1);
   \draw [] (0, -1) -- (-1, -2);
   \draw [] (1, -1) -- (2, -2);
   \draw [] (-1,1) -- (-1,-2);
   \draw [] (-1, 1) -- (2,1);
   \draw [] (2, 1) -- (2,-2);
   \draw [] (-1, -2) -- (2,-2);
    \node [left] at (0, 0) {$x$};
			\draw [fill, red] (0, 0) circle[radius = 1mm];
   \node [right] at (1, 0) {$x^{-1}z_1$};
			\draw [fill, red] (1, 0) circle[radius = 1mm];
   \node [left] at (0, -1) {$x^{-1}z_2$};
			\draw [fill, red] (0, -1) circle[radius = 1mm];
   \node [right] at (1, -1) {$z_3x$};
			\draw [fill, red] (1, -1) circle[radius = 1mm];
   \node [left] at (-1, 1) {$x^{-1}z_3$};
			\draw [fill, red] (-1, 1) circle[radius = 1mm];
   \node [right] at (2, 1) {$z_2x$};
			\draw [fill, red] (2, 1) circle[radius = 1mm];
   \node [right] at (2, -2) {$x^{-1}$};
			\draw [fill, red] (2, -2) circle[radius = 1mm];
   \node [left] at (-1, -2) {$z_1x$};
			\draw [fill, red] (-1, -2) circle[radius = 1mm];
   
   \end{tikzpicture} 
        \caption{Component isomorphic to a cube, i.e.,  $C_4\square P_2$}
        \label{fig:2}
    \end{figure}

 Clearly, we cannot add any more vertex in this component as it will contradict the fact that each vertex is of degree $3$. Thus the claim holds.

 From the above two claims, it follows that $\Gamma(G)$ is the disjoint union of a $K_4$ (complete graph on $4$ vertices containing $e$) and some copies of $C_4$ (consisting of elements of order $4$) and some copies of $C_4\square P_2$. Hence $\Gamma(G)$ is planar.
 \end{proof}
 
 \section{Cliques in $\Gamma(G)$}\label{clique-section}
In this section, we compute the clique number of $\Gamma(G)$, when $G$ is an abelian $p$-group.
 \begin{theorem}
     Let $G$ be an abelian $2$-group and $H$ be a maximal elementary abelian subgroup of $G$. If $H\cong {\mathbb{Z}^t_2}$, then $\omega(\Gamma(G))=\omega(\Gamma(H))=2^t$. 
 \end{theorem}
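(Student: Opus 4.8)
The plan is to exploit the fact that in an abelian $2$-group adjacency is governed entirely by squares. For distinct $x,y\in G$ one has $x\sim y$ in $\Gamma(G)$ precisely when $\circ(xy)=2$, i.e. $(xy)^2=x^2y^2=e$ with $xy\neq e$; equivalently $x^2=y^{-2}$. First I would record that $H$ is necessarily the subgroup $\Omega_1(G)=\{g\in G:g^2=e\}$: since $G$ is abelian this set is a subgroup, it is elementary abelian, and it contains every elementary abelian subgroup, so it is the unique maximal one. Hence $|H|=|\Omega_1(G)|=2^t$. The lower bound is then immediate: $H$ itself is a clique, because for distinct $x,y\in H$ the product $xy$ lies in $H\setminus\{e\}$ and therefore has order $2$, so $x\sim y$. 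Thus $\Gamma(H)=K_{2^t}$ and $\omega(\Gamma(G))\ge\omega(\Gamma(H))=2^t$.

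The substance is the upper bound $\omega(\Gamma(G))\le 2^t$. Let $C$ be a clique; cliques of size at most $2$ are harmless since $2^t\ge 2$, so I would assume $|C|\ge 3$. Picking three distinct elements $x,y,w\in C$ and combining $x^2y^2=e$, $x^2w^2=e$, $y^2w^2=e$ yields $w^4=e$, and by symmetry every element of $C$ has order dividing $4$ (this is the same mechanism as Claim~1 in the proof of Theorem~\ref{abelian-2-group-integral}). Then for any $x,y\in C$ the element $y^2$ has order at most $2$, so $y^{-2}=y^2$, and the adjacency relation $x^2=y^{-2}$ becomes $x^2=y^2$. Consequently all elements of $C$ share a common square $z:=x^2$, giving $C\subseteq T_z:=\{g\in G:g^2=z\}$.

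Finally I would identify $T_z$ as a coset of $H$: if $g_0\in T_z$ then $g^2=z=g_0^2$ iff $(gg_0^{-1})^2=e$ iff $gg_0^{-1}\in\Omega_1(G)=H$, so $T_z=g_0H$ and $|T_z|=2^t$ (the case $z=e$ simply recovers $C\subseteq H$). Hence $|C|\le 2^t$, which with the lower bound yields $\omega(\Gamma(G))=\omega(\Gamma(H))=2^t$. The only delicate point is the reduction forcing every element of a clique of size $\ge 3$ to have order dividing $4$, and thence a common square; once that is secured the coset count is automatic, and I expect no further obstacle beyond bookkeeping with the abelian square identities.
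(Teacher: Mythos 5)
Your proof is correct, and it takes a genuinely different route from the paper's. The paper argues by contradiction: assuming a clique $M=\{x_1,\dots,x_l\}$ with $l>2^t$, it shows each $x_i$ has order dividing $4$, analyzes the subgroup $T=\langle x_1,\dots,x_l\rangle$ (claimed to be $\mathbb{Z}_4\times\mathbb{Z}_2^{l-1}$ or $\mathbb{Z}_2^{l-1}$), extracts an elementary abelian subgroup of rank at least $l-1$, and uses maximality of $H$ to force $2^{l-1}\le 2^t<l$, a numerical contradiction. You instead identify $H$ with $\Omega_1(G)=\{g:g^2=e\}$, show that in any clique of size at least $3$ all elements share a common square $z$, and conclude that the clique sits inside the fiber $T_z=\{g:g^2=z\}$, which is a single coset of $H$ and hence has exactly $2^t$ elements. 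Your argument is more direct and yields a sharper structural fact for free (every maximum clique is contained in, indeed equals, a coset of $\Omega_1(G)$ on which the squaring map is constant); it also sidesteps the paper's assertion that $\langle x_1,\dots,x_l\rangle\cong\mathbb{Z}_4\times\mathbb{Z}_2^{l-1}$, which requires the generators to be independent and is not fully justified there. The paper's rank-counting style has the advantage of running in parallel with its treatment of the odd-prime case in the next theorem, where the coset count gives $p^k$ rather than the correct bound $\frac{p^k+1}{2}$ and an inverse-pairing argument is still needed. Both proofs establish the theorem; yours is the cleaner one for $p=2$.
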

 \begin{proof}
    Clearly $H$ is a clique in $\Gamma(G)$ of size $2^t$. If possible, let $M=\{x_1,x_2,\ldots ,x_l\}$ be a maximum clique in $\Gamma(G)$ with $l>2^t$. Thus, $\circ(x_ix_j)=2$ for all $i,j\in \{1,2,\ldots,l\}$ with $i\neq j$. 

    As $\circ(x_1x_2)=\circ(x_2x_3)=2$ and $G$ is abelian, their product is of order $1$ or $2$. In the former case, we have $x_1x_3{x}^2_2=e$ i.e., ${x}^{-2}_2=x_1x_3$. Also, $(x_1x_2)^2=e$ i.e., ${x}^2_1={x}^{-2}_2$. Thus, ${x}^2_1={x}^{-2}_2=x_1x_3$ i.e., $x_1=x_3$, a contradiction. Therefore, $\circ(x_1{x}^2_2x_3)=2$ i.e., $(x_1x_3)^2{x}^4_2=e$ i.e., ${x}^4_2=e$. So $\circ(x_2)=1 \mbox{ or } 2 \mbox{ or } 4$. 

    If $\circ(x_2)=4$, and as ${x}^2_2{x}^2_j=e$ for all $j=1,2,\ldots,l$, this implies ${x}^2_j={x}^{-2}_2={x}^2_2$ i.e., $${x}^2_i={x}^2_j, \forall i,j \mbox{ and } \circ(x_i)=4, \forall i.$$ 

    As $G$ is abelian, we have $T=\langle x_1,x_2,\ldots,x_l\rangle\cong \mathbb{Z}_4 \times \mathbb{Z}^{l-1}_2$. Note that $T$, and hence $G$, has a subgroup isomorphic to $\mathbb{Z}^{l}_2$. Thus $\Gamma(G)$ has a clique of size $2^l$. Also by the given condition, $l\leq t$. Then $2^l \leq 2^t<l$, a contradiction. So $\circ(x_2)\neq 4$. Note that in similar way, it can be shown that $\circ(x_i)\neq 4$ for all $i=1,2,\ldots,l$. Hence  $\circ(x_i)=1$ or $2$, for all $i$. Then $T=\langle x_1,x_2,\ldots,x_l\rangle\cong \mathbb{Z}^{l-1}_2$. Again, by the given condition, we have $l-1\leq t$, i.e., $2^{l-1}\leq 2^t<l$, a contradiction.
    
    Subsequently, $\Gamma(G)$ has no clique of size larger than $2^t$ and the result follows.    
 \end{proof}


  \begin{theorem}
     Let $G$ be an abelian $p$-group ($p$ being an odd prime) and $H$ be a maximal elementary abelian subgroup of $G$. If $H\cong {\mathbb{Z}^k_p}$, then $\omega(\Gamma(G))=\omega(\Gamma(H))=\dfrac{p^k+1}{2}$. 
 \end{theorem}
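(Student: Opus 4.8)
The plan is to prove the two equalities separately: first compute $\omega(\Gamma(H))$ exactly, which simultaneously furnishes the lower bound for $G$, and then show that no clique of $\Gamma(G)$ can exceed this value by forcing every large clique to lie inside $H$.

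First I would analyse $\Gamma(H)$ for $H\cong\mathbb{Z}_p^k$. Every non-identity element of $H$ has order $p$, so two distinct $x,y$ are adjacent exactly when $\circ(xy)=p$, i.e. when $xy\ne e$, i.e. when $y\ne x^{-1}$. Hence the only non-adjacent distinct pairs are the inverse pairs $\{x,x^{-1}\}$ with $x\ne e$; since $p$ is odd we have $x\ne x^{-1}$ for all $x\ne e$, so these pairs form a perfect matching on the $p^k-1$ non-identity vertices, while $e$ is adjacent to everything. Thus $\overline{\Gamma(H)}$ is a matching of $\tfrac{p^k-1}{2}$ edges together with the isolated vertex $e$, and a clique of $\Gamma(H)$ is precisely an independent set of $\overline{\Gamma(H)}$. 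The largest such set takes $e$ together with one endpoint of each matching edge (and one checks directly that any such transversal of the inverse pairs, augmented by $e$, is genuinely a clique), giving $\omega(\Gamma(H))=1+\tfrac{p^k-1}{2}=\tfrac{p^k+1}{2}$. Since $H$ is a subgroup, $\Gamma(H)$ is the subgraph of $\Gamma(G)$ induced on $H$, so $\omega(\Gamma(G))\ge\tfrac{p^k+1}{2}$ immediately.

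For the reverse inequality I would take a maximum clique $M$ with $l=|M|$ and reduce the orders of its elements. It suffices to treat $l\ge 3$: the case $l\le 2$ is consistent with $l\ge\tfrac{p^k+1}{2}\ge 2$ only when $p=3,k=1$, where $\tfrac{p^k+1}{2}=2$ and equality is immediate. For any three distinct $x,y,z\in M$, adjacency gives $(xy)^p=(xz)^p=(yz)^p=e$, which in the abelian group $G$ reads $x^py^p=x^pz^p=y^pz^p=e$. The first two force $y^p=z^p$, and combined with $y^pz^p=e$ this yields $z^{2p}=e$; since $G$ is a $p$-group with $p$ odd, $\circ(z)\mid 2p$ forces $\circ(z)\mid p$, and symmetrically $\circ(x),\circ(y)\mid p$. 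As $l\ge 3$, every element of $M$ sits in such a triple and so has order dividing $p$, whence $M\subseteq\Omega_1(G):=\{g\in G:g^p=e\}$.

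Finally I would identify $\Omega_1(G)$ with $H$. Because $G$ is abelian, $\Omega_1(G)$ is a subgroup, it is elementary abelian, and it contains every elementary abelian subgroup of $G$; hence it is the unique maximal elementary abelian subgroup and therefore equals $H$. Consequently $M$ is a clique of $\Gamma(H)$, so $l\le\omega(\Gamma(H))=\tfrac{p^k+1}{2}$, and combined with the lower bound this gives $\omega(\Gamma(G))=\omega(\Gamma(H))=\tfrac{p^k+1}{2}$. The main obstacle is the order-reduction step: showing that a clique of size at least $3$ must consist entirely of elements of order $p$, which is exactly where the oddness of $p$ is essential (for $p=2$ both the value $2^t$ and the argument differ), together with the clean recognition that the relevant order-$p$ elements fill out precisely the maximal elementary abelian subgroup $H$.
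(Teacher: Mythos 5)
Your argument is correct, and its core engine coincides with the paper's: pick three distinct elements of a clique, use that $G$ is abelian to convert the adjacency relations into $x^py^p=x^pz^p=y^pz^p=e$, deduce $z^{2p}=e$, and conclude $\circ(z)\in\{1,p\}$ since $p$ is odd and $G$ is a $p$-group. You reach this a bit more directly than the paper, which instead splits on whether $\circ(x_1x_3x_2^2)$ equals $1$ or $p$ and then writes each $x_i=(x_ix_2)x_2^{-1}$ as a product of two order-$p$ elements; the substance is the same. Where you genuinely diverge is in closing the contradiction. The paper asserts $T=\langle x_1,\ldots,x_l\rangle\cong\mathbb{Z}_p^{l-1}$ and infers $l-1\le k$; that isomorphism type is not justified, since the clique elements need not be independent generators (in $\mathbb{Z}_3^2$ a maximum clique has five elements but generates a group of rank two), so the paper's rank count is the weakest point of its proof. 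Your route --- observing that all clique elements land in $\Omega_1(G)$, that $\Omega_1(G)$ is the unique maximal elementary abelian subgroup of an abelian $p$-group and hence equals $H$, and therefore $l\le\omega(\Gamma(H))$ --- sidesteps this entirely, and it explains why the answer is exactly $\omega(\Gamma(H))$ rather than merely bounded by $k+1$. You also make explicit the computation $\omega(\Gamma(H))=\frac{p^k+1}{2}$ (the complement of $\Gamma(H)$ is a perfect matching on the non-identity elements together with the isolated vertex $e$), which the paper dismisses with ``clearly''; and your separate treatment of cliques of size at most $2$, forced only when $p^k=3$, is a legitimate bit of bookkeeping that the paper's triple-based argument also tacitly requires. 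In short: same key lemma, but your finishing step is the more robust of the two.
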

 \begin{proof}
    Clearly $\Gamma(G)$ has a clique of size $\dfrac{p^k+1}{2}$. If possible, let $M=\{x_1,x_2,\ldots,x_l\}$ be a maximal clique in $\Gamma(G)$ with $l>\dfrac{p^k+1}{2}$. As $M$ is a clique, we have $\circ(x_ix_j)=p$ for all $i,j~(i\neq j)$. As $\circ(x_1x_2)=\circ(x_2x_3)=p$, we have $\circ(x_1x_3x_2^2)=1 \mbox{ or } p$. 

    If $\circ(x_1x_3x_2^2)=1$, then $x_2^{-2}=x_1x_3$ i.e., $x_2^{-2p}=1$. Thus, $\circ(x_2)$ divides $2p$ i.e., $\circ(x_2)=1 \mbox{ or } p$. If $\circ(x_2)=1$, then $x_1x_3=e$, which is a contradiction and hence $\circ(x_2)=p$. Now, any $x_i$ can be written as $x_i=(x_ix_2)x_2^{-1}$ (product of two elements of order $p$) which implies $\circ(x_i)=p \mbox{ or } 1$ for all $i$. Hence, $T=\langle x_1,x_2,\ldots,x_l \rangle\cong \mathbb{Z}^{l-1}_p$ is an elementary abelian subgroup of $G$ and by the given condition, $l-1\leq k$. This contradicts $l>\dfrac{p^k+1}{2}$. So, we have $\circ(x_1x_3x_2^2)=p$, then $(x_1x_3)^px_2^{2p}=e$ i.e., $x_2^{2p}=e$ i.e., $\circ(x_2)=1 \mbox{ or } p$. By similar arguments, we get a contradiction. Hence $\Gamma(G)$ has no clique of size greater than $\dfrac{p^k+1}{2}$.
 \end{proof}

 \section{Conclusion and Open Issues}\label{conclusion-section}
 In this paper, we proved a few results on planarity, and adjacency and Laplacian spectrum of prime order element graph of a group. Although the result on planarity is a complete classification, the results on adjacency and Laplacian spectrum are specified to specific families, namely cyclic and abelian groups respectively. We conclude with some open issues and speculations, which were numerically observed using GAP/SAGE \cite{sage} computations.

 \begin{enumerate}
     \item Theorem \ref{abelian-2-group-integral} proves that abelian $2$-groups yield integral adjacency spectrum. A partial converse of this also seems to be true, i.e., {\it if $p$ be an odd prime dividing $|G|$, then $\Gamma(G)$ has an irrational adjacency eigenvalue.}
     \item Theorem \ref{abelian-laplacian-integral} proves that for any abelian group $G$, $\Gamma(G)$ is Laplacian integral. However, we strongly believe that this is true for any group, in general, i.e., {\it $\Gamma(G)$ is Laplacian integral for any group $G$.}
 \end{enumerate}

\section*{Statements and Declarations}
The second author is supported by the funding of UGC [NTA Ref. No. 211610129182], Govt. of India.  The third author acknowledges the funding of DST-FIST Sanction no. $SR/FST/MS-I/2019/41$ and DST-SERB-MATRICS Sanction no. $MTR/2022/000020$, Govt. of India. 

\subsection*{Data Availability Statements}
Data sharing not applicable to this article as no datasets were generated or analysed during the current study.

\subsection*{Competing Interests} The authors have no competing interests to declare that are relevant to the content of this article.

\end{document}